\newcommand{\Z}{\mathbb{Z}}
\newcommand{\R}{\mathbb{R}}
\newcommand{\N}{\mathbb{N}}
\renewcommand{\P}{\mathbb{P}}
\newcommand{\E}{\mathbb{E}}
\newcommand{\mc}[1]{\mathcal{#1}}
\newcommand{\xc}{X^\circ}
\newcommand{\pv}{\boldsymbol{p}_m}
\newcommand{\av}{\boldsymbol{a}_m}
\newcommand{\bs}[1]{\boldsymbol{#1}}
\newcommand{\cweak}{\overset{w}{\to}}
\newcommand{\Mod}[1]{\,\mathrm{mod}\,#1}
\newcommand{\bro}{\gamma}
\newtheorem{theorem}{Theorem}
\newtheorem{proposition}{Proposition}
\newtheorem{lemma}{Lemma}
\newtheorem{conj}{Conjecture}
\newtheorem{cor}{Corollary}
\theoremstyle{definition}
\newtheorem{exa}{Example}
\newtheorem{defn}{Definition}
\newtheorem{rem}{Remark}
\newcommand{\xqed}[1]{%
   \leavevmode\unskip\penalty9999 \hbox{}\nobreak\hfill
   \quad\hbox{\ensuremath{#1}}}
\newcommand{\Enddef}{\xqed{\blacktriangleleft}}
\newcommand{\greedy}{\sigma_{\textnormal{greedy}}}
\newcommand{\doubleblind}[1]{}
\renewcommand{\doubleblind}[1]{#1}
\title{Cyclic products and optimal traps\\ in cyclic birth and death chains}
\begin{document}
\maketitle
\abstract{A birth-death chain is a discrete-time Markov chain on the integers whose transition probabilities $p_{i,j}$ are non-zero if and only if $|i-j|=1$.  We consider birth-death chains whose birth probabilities $p_{i,i+1}$ form a periodic sequence, so that $p_{i,i+1}=p_{i \mod m}$ for some $m$ and $p_0,\ldots,p_{m-1}$.  The trajectory $(X_n)_{n=0,1,\ldots}$ of such a chain satisfies a strong law of large numbers and a central limit theorem.  We study the effect of reordering the probabilities $p_0,\ldots,p_{m-1}$ on the velocity $v=\lim_{n\to\infty} X_n/n$.  The sign of $v$ is not affected by reordering, but its magnitude in general is.  We show that for Lebesgue almost every choice of $(p_0,\ldots,p_{m-1})$, exactly $(m-1)!/2$ distinct speeds can be obtained by reordering.  We make an explicit conjecture of the ordering that minimises the speed, and prove it for all $m\leq 7$.  This conjecture is implied by a purely combinatorial conjecture that we think is of independent interest. }

\medskip

\small{
\noindent {\bf Keywords:}  birth and death processes, permutations, cyclic products.

\noindent {\bf MSC2020:}  60C05, 60J10, 05A99.}

\section{Introduction and main results}
Birth and death chains are (discrete-time, time-homogeneous) Markov chains on $\Z$ with transition probabilities $(p_{i,j})_{i,j\in \Z}$ satisfying $p_{i,i+1}+p_{i,i-1}=1$ for each $i \in \Z$.  Often ``birth and death chain'' allows $p_{i,i}>0$, but here for simplicity we assume that $p_{i,i}=0$.  

In this paper we consider \emph{cyclic}  birth and death chains  $X=(X_n)_{n \ge 0}$ on $\Z$, by which we mean that there exist $m\in \N$ and $\pv=(p_i)_{i=0}^{m-1}\in (0,1)^m$ such that for each $i \in \Z$, ($p_{i,i}=0$ and) $p_{i,i+1}=p_{i \Mod m}$.  Such models have been studied in general dimensions in e.g.~\cite{veto,take,KU}, and have been called \emph{random walks in periodic environment}.  To avoid any confusion with periodicity of a Markov chain we will refer to them as random walks in \emph{cyclic}  environment, or \emph{cyclic birth and death chains} (CBD).  In the 1-dimensional setting there is an elementary criterion for transience ($|X_n|\to \infty$) and recurrence ($X_n=0$ infinitely often), in which the crucial quantity is
\[\bro=\bro(\pv)=
\prod_{i=0}^{m-1}\rho_i,\quad \text{ where }\rho_i=\frac{1-p_i}{p_i}.\] 
The following result that can be proved using standard Markov chain  techniques.  Each conclusion holds with probability 1.
\begin{proposition}
\label{prp:sign}
Let $X$ be a CBD  with  $\pv:=(p_i)_{i=0}^{m-1}\in (0,1)^m$. Then $v(\pv):=\lim_{n\to \infty}n^{-1}X_n$ almost surely exists and is deterministic, and 
\begin{itemize}
\item $v(\pv)>0$ iff $\gamma<1$
\item $v(\pv)<0$ iff $\gamma>1$
\item $v(\pv)=0$ and $X$ is recurrent iff $\gamma=1$.
\end{itemize}
\end{proposition}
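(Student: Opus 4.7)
The plan is to establish existence of an almost-sure deterministic limit via a Markov-chain ergodic argument applied to $Y_n := X_n \Mod m$, then to read off the sign of $v(\pv)$ from the stationary current around the cycle, and finally to handle recurrence in the critical case $\gamma=1$ via the classical birth-death criterion.

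For the existence step, the key observation is that $(Y_n)$ is itself a Markov chain on the finite state space $\Z/m\Z$, with transitions $i \mapsto (i\pm 1) \Mod m$ of probabilities $p_i$ and $1-p_i$; since all $p_i \in (0,1)$ it is irreducible, hence admits a unique stationary distribution $\pi$. Decompose
\[X_n - X_0 = \sum_{k=0}^{n-1}(2p_{Y_k}-1) + M_n,\]
where $M_n := \sum_{k=0}^{n-1}\bigl[(X_{k+1}-X_k)-(2p_{Y_k}-1)\bigr]$ is a martingale with bounded increments. Birkhoff's ergodic theorem applied to the finite irreducible chain $(Y_k)$ gives $n^{-1}\sum_k (2p_{Y_k}-1) \to \sum_i \pi(i)(2p_i-1) =: v(\pv)$ a.s., and the $L^2$-martingale SLLN gives $M_n/n \to 0$ a.s. Thus $X_n/n \to v(\pv)$ a.s., which is deterministic as claimed.

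For the sign, stationarity on the cycle forces the edge current $J := \pi(i)p_i - \pi(i+1)(1-p_{i+1})$ to be independent of $i$, and telescoping $\pi(i)(2p_i-1)$ around $\Z/m\Z$ yields $v(\pv) = mJ$. Setting $F_i := \pi(i)p_i$, the definition of $J$ becomes the recursion $F_{i+1} = (F_i - J)/\rho_{i+1}$; iterating from $i=0$ to $i=m$, using the cyclic closure $F_m = F_0$ and $\prod_{j=1}^m \rho_j = \gamma$, produces
\[F_0\Big(1-\tfrac{1}{\gamma}\Big) = -J\sum_{k=1}^m \prod_{j=k}^m \rho_j^{-1}.\]
Since the sum is strictly positive and $F_0 = \pi(0)p_0 > 0$, we conclude $\mathrm{sign}(v(\pv)) = \mathrm{sign}(J) = \mathrm{sign}(1-\gamma)$, establishing the first two bullets. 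The case $\gamma=1$ forces $J=0$, hence $v=0$; recurrence then follows from the standard birth-death criterion on $\Z$, which in the cyclic setting reduces to divergence of $\sum_k \gamma^k$ and $\sum_k \gamma^{-k}$, both of which diverge precisely when $\gamma=1$. The main obstacle is the bookkeeping in the second step---iterating the one-step recursion cleanly around the cycle so that $\gamma=\prod_i\rho_i$ emerges as the sign discriminant; the first step is standard once one notices the Markov structure of ``position $\Mod m$'', and the criterion in the critical case is textbook.
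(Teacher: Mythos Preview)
Your proof is correct but proceeds along quite different lines from the paper. The paper observes $X$ at the successive times it has moved by $\pm m$ from the previous observation; this embedded process is $m$ times a simple random walk whose right-step probability is $h=(1+\gamma)^{-1}$ (a standard gambler's-ruin/resistance computation), and since the expected inter-observation time is finite, the law of large numbers and recurrence/transience for simple random walk transfer directly to $X$. Your route instead works with the projected chain $Y_n=X_n\bmod m$: you extract $v=\sum_i\pi(i)(2p_i-1)$ via ergodic averaging plus a martingale remainder, and then read off $\mathrm{sign}(v)$ from the stationary edge current $J$ by iterating a one-step recursion around the cycle until $\gamma=\prod_i\rho_i$ appears explicitly.

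The paper's argument is shorter and more conceptual---it reduces everything to a textbook case in one stroke---whereas yours is more hands-on but delivers the formula $v=\sum_i\pi(i)(2p_i-1)$ (which is the paper's later Lemma~\ref{lem:circ}) as a free byproduct, and makes the emergence of $\gamma$ completely transparent rather than hidden inside a resistance calculation. One small remark: your appeal to ``Birkhoff'' for $(Y_n)$ is really the ergodic theorem for finite irreducible Markov chains (the chain is periodic when $m$ is even, but Ces\`aro averages from any starting state still converge), and the birth--death recurrence criterion you invoke at the end does indeed reduce, in the cyclic case, to the geometric comparison with $\sum_k\gamma^{\pm k}$ you state.
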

\begin{proof}
Observe the chain $X$ first at time 0, and thereafter observe the chain $X$ at times at which its displacement is $\pm m$ from the previous observation.  This new walk is ($m \times$) a simple random walk that is symmetric (hence recurrent, with velocity 0) in the third case above and biased to the right or left otherwise (see Lemma \ref{lem:Tspeed} and its proof below for more details).  Since the expected time for $X$ to reach $\pm m$ is finite, this proves the claim for the original chain $X$ as well.
\end{proof}

Motivated by trapping behaviour prevalent in \emph{random walk in random environment} on $\Z$ (where $(p_{i,i+1})_{i\in \Z}$ are chosen to be i.i.d.~random variables),
we are interested in how the velocity $v$ depends on the order of the $p_i$ for fixed $m$.  According to Proposition \ref{prp:sign}, the \emph{sign of $v$} (or equivalently, whether or not $X_n \to \pm \infty$) \emph{does not depend on the order of the $p_i$}.      If the velocity is 0 then it can't be changed by changing the order of the $p_i$, but in this case the variance may be of interest. Therefore we are primarily interested in the case where $X_n\to \infty$ (and $v>0$) with probability 1.  
  In particular, given a sequence $\pv=(p_i)_{i=0}^{m-1}\in (0,1)^m$ for which $\bro(\pv ) <1$, (so all velocities  arising from permutations will have positive sign), here are two natural questions that one can ask:
 
 \begin{itemize}
  \item[\bf{Q1:}]  What is the number $N(\pv)$ of distinct speeds achievable via permutations of $\pv$? 
 \item[\bf{Q2:}] In which order one should arrange these values to achieve the \emph{minimum} speed, or indeed the \emph{maximum} speed?
 \end{itemize}
  These questions do not seem to have been addressed in the literature previously, and they both turn out to be interesting.  In this paper we state some conjectures and provide partial answers to these questions, with our main results being Theorems \ref{thm:ae} and \ref{thm:greedy3} below.  There is trivially only 1 possible speed when $m=1,2$.  Theorem \ref{thm:ae} states that for $m\ge 3$ and Lebesgue a.e.~$\pv$ the answer to {\bf Q1} is $(m-1)!/2$.  This value arises from the fact  that the velocity is typically only invariant to rotations and reversal of the elements of $\pv$.   Note that invariance under rotations is trivial, while invariance under reversal seems to be a new (and we think surprising) result.  
\begin{theorem}
\label{thm:ae}
For any $m\ge 3$ and for Lebesgue a.e. $\pv\in (0,1)^m$ 
the number of distinct speeds satisfies 
\[N(\pv)= \frac{m!}{2m}=\frac{(m-1)!}{2}.\]  
Moreover, $N(\pv)\le (m-1)!/2$ for every $m\ge 3$ and $\pv\in (0,1)^m$.
\end{theorem}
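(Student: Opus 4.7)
My plan is to establish the universal bound $N(\pv)\le(m-1)!/2$ by exhibiting two invariances of the velocity (cyclic rotation and sequence reversal) and then argue that for a.e.\ $\pv$ these are the only coincidences. I start from the regenerative formula $v(\pv) = m/\E[T_m]$, which holds since $\gamma<1$ makes the hitting times of levels $m,2m,\ldots$ a renewal process with finite mean. Solving the standard birth–death recursion for $E_i^+$, the expected time from $i$ to $i+1$, and summing the resulting geometric series in $\gamma$ gives
\[v(\pv) \;=\; \frac{m(1-\gamma)}{\sum_{i=0}^{m-1} A_i},\qquad A_i \;=\; \sum_{r=0}^{m-1}\frac{1}{p_{(i-r)\Mod m}}\prod_{\ell=0}^{r-1}\rho_{(i-\ell)\Mod m}.\]

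The key algebraic step is to rewrite the denominator purely in terms of cyclic arc-sums. Substituting $1/p_s = 1+\rho_s$ and regrouping by arc length yields
\[\sum_{i=0}^{m-1} A_i \;=\; m(1+\gamma) + 2\sum_{\ell=1}^{m-1}\Sigma_\ell(\pv),\qquad \Sigma_\ell(\pv) \;:=\; \sum_{s=0}^{m-1}\rho_s\rho_{s+1}\cdots\rho_{s+\ell-1}\]
with indices mod $m$. By commutativity, each arc-product depends only on its underlying subset $\{s,s+1,\ldots,s+\ell-1\}$; both cyclic rotation and order-reversal of $\pv$ permute the collection of length-$\ell$ cyclic arcs among themselves, so each $\Sigma_\ell$ is invariant under the full dihedral group $D_m$ of order $2m$ acting on permutations of $\{0,\ldots,m-1\}$. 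Hence $v$ is $D_m$-invariant and $N(\pv)\le m!/(2m) = (m-1)!/2$ for every $\pv$.

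For the almost-sure equality I argue that permutations $\sigma,\tau$ lying in distinct dihedral orbits produce velocities $v(\sigma\pv),v(\tau\pv)$ that are distinct rational functions of $\pv$. Granted this, each $\{\pv:v(\sigma\pv)=v(\tau\pv)\}$ is a proper algebraic subvariety of $(0,1)^m$ and hence Lebesgue-null, and a finite union over pairs of orbits preserves nullity. To establish distinctness of the rational functions, it suffices to show that $\sum_i A_i$ differs between the two permutations in some homogeneous component. The degree-$2$ part of $\sum_i A_i$ is exactly $2\Sigma_2(\sigma\pv) = 2\sum_{\{a,b\}}\rho_a\rho_b$ summed over pairs cyclically adjacent in $\sigma$. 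This polynomial records the cyclic edge multiset of $\sigma$, and the classical fact that the edge set of a Hamiltonian cycle determines the cycle up to rotation and reversal gives distinct $\Sigma_2$ polynomials for distinct dihedral orbits.

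The main obstacle is the identity that rewrites $\sum_i A_i$ in terms of the cyclic arc-sums $\Sigma_\ell$: reversal invariance is invisible at the level of the individual $A_i$ (each one genuinely changes under reversal—only the sum is preserved) and only becomes manifest after the denominator is regrouped by arc length, which requires careful bookkeeping of the cyclic indices. Once this identity is in place, both the universal upper bound and the generic distinctness of the $(m-1)!/2$ speeds follow directly.
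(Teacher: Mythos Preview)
Your argument is correct, and for the almost-everywhere part it is genuinely different from (and cleaner than) the paper's. Both proofs obtain the universal upper bound in the same way, by rewriting the denominator of the speed as $m(1+\gamma)+2\sum_{\ell=1}^{m-1}\Sigma_\ell$ and observing that each cyclic arc-sum $\Sigma_\ell$ is invariant under the dihedral group. The divergence is in how one shows that two permutations in different dihedral orbits give different speeds off a null set. The paper introduces a multi-point Steinhaus lemma and iterated discrete derivatives: assuming the coincidence set has positive measure, it finds a lattice of points in that set and computes an $(m-2)$-fold discrete derivative of the difference polynomial $H$, isolating a single degree-$(m-2)$ monomial whose coefficient is $-1$, contradicting $H\equiv 0$ on the lattice. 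You bypass all of this by looking instead at the degree-$2$ homogeneous part of $H$, which is $\Sigma_2(\sigma\cdot)-\Sigma_2(\tau\cdot)$; since $\Sigma_2$ records exactly the edge set of the Hamiltonian cycle associated to the permutation, and a Hamiltonian cycle is determined by its edge set up to rotation and reversal, this degree-$2$ part is nonzero whenever $\sigma,\tau$ lie in distinct dihedral cosets. Then the standard fact that a nonzero polynomial vanishes on a Lebesgue-null set finishes the argument. Your route avoids the Steinhaus machinery entirely and replaces an $(m-2)$-fold differentiation by a one-line combinatorial observation; the paper's route, on the other hand, works directly at the top degree and is perhaps closer in spirit to how one would discover the result without having the Hamiltonian-cycle picture in mind. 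One small omission: your derivation of $v=m/\E[T_m]$ assumes $\gamma<1$, so you should note (as the paper does) that the set $\{\gamma=1\}$ is null and that the case $\gamma>1$ follows by the symmetry $\pv\mapsto \mathbf{1}-\pv$.
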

Obviously the equality in Theorem \ref{thm:ae} cannot be satisfied (for $m>3$) for every $\pv=(p_0,\dots, p_{m-1})\in (0,1)^m$, since e.g.~if $p_i=p_j$ then the permutation that simply  switches $i$ and $j$ also preserves the speed. 
Theorem \ref{thm:ae} immediately implies that  when $m=3$ all rearrangements of $\pv$ give the same velocity, while for $m\ge 4$ and typical $\pv$, multiple different velocities are achievable via rearrangement.
 
To simplify discussions about ``optimal'' permutations, it is convenient (and loses no generality) to restrict attention henceforth to  $\pv$ for which the elements are non-increasing (so $p_0\ge p_1\ge \dots\ge p_{m-1}$).   In this case we believe that for fixed $m$ there exists a permutation $\sigma_{\text{greedy}}$ that is the universal minimiser of the speed for all such $\pv$.  
That is, for each $m$ there is a unique (up to rotations and reversals) permutation that minimises the speed no matter what the values of the $p_0\ge p_1\ge \dots\ge p_{m-1}$.    
 \begin{defn}
\label{def:cco}
Given a vector $\av=(a_0,a_1,\dots, a_{m-1})\in (0,\infty)^m$ with non-increasing entries, define the \emph{circular symmetric ordering} to be
 \[(a_0,a_2,a_4,\dots,a_5,a_3,a_1).\qquad \qquad \Enddef\]
 \end{defn}
 Let $\mc{S}_m$ be the set of permutations of $(0,1,\dots, m-1)$.  Of course,  $\mc{S}_m$ can be considered as the set of bijections from $\{0,1,\dots, m-1\}$ to itself.  We will use standard $()$  notation for permutations, e.g.~if $m=4$ and $\sigma=(0231)$ then $\sigma(0)=0$, $\sigma(1)=2$.  For a vector $\bs{x}=(x_i)_{i=0}^{m-1}$ and $\sigma\in \mc{S}_m$, write $\bs{x}_\sigma$ for the vector $(x_{\sigma^{-1}(i)})_{i=0}^{m-1}$.  For example, if $m=4$ and $\sigma=(0231)$ then $\sigma(0)=0$, $\sigma(1)=2$ etc., and $\bs{x}_\sigma=(x_0,x_3,x_1,x_2)$.    

We call the permutation corresponding to the circular symmetric ordering  $\greedy$, because it groups large values of $\av$ with  each other, and small values of $\av$ with  each other in a circular way.
  \begin{defn}
The \emph{greedy permutation} $\greedy$ is given by
\[\greedy^{-1}(i)=2i \quad \text{ and }\quad \greedy^{-1}(m-1-i)=2i+1,\]
for $i=0,1,\dots, \lfloor m/2\rfloor-1$.  \Enddef
\end{defn}
By definition, the greedy permutation depends on $m$ but not on the actual values $\av$, e.g.~if $m=9$ then $\greedy=(081726354)$.  Let $\av\in (0,\infty)^m$ with decreasing entries.  For $r\in [m]$ and a permutation $\sigma\in \mc{S}_m$ define 
\[P_r(\sigma;\av)=\sum_{k=0}^{m-1}\prod_{i=0}^{r-1}a_{\sigma^{-1}(k+i)},\]
with indices interpreted $\Mod m$.  

The following conjecture says that the greedy permutation maximises $P_r$ for each $r$.   We think that it is an interesting standalone open problem.  It also immediately implies that the greedy permutation minimises the speed (see Proposition \ref{prp:explicit} and Conjecture \ref{conj:min} below).
\begin{conj}[Greedy cyclic products are maximal]
\label{conj:greedy}
Let $\av\in (0,\infty)^m$ have decreasing entries.  Then for every  $r\in [m]$, and every $\sigma\in \mc{S}_m$,
\[P_r(\greedy; \av)\ge P_r(\sigma; \av).\]
\end{conj}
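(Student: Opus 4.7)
The plan is to prove the inequality by a local exchange argument built on a single identity for the effect of a transposition on $P_r$. First I would dispense with the trivial values of $r$: for $r=1$ and $r=m$ the quantity $P_r(\sigma;\av)$ is a symmetric function of $\av$ (a sum, or $m$ times a single product), and for $r=m-1$ each window omits exactly one position, so $P_{m-1}(\sigma;\av)=\bigl(\prod_i a_i\bigr)\sum_i a_i^{-1}$, again symmetric in $\av$. Hence equality holds in all three cases, and it suffices to consider $2\le r\le m-2$.

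The main tool is the following swap identity. Fix $\sigma$ and write $b_i=a_{\sigma^{-1}(i)}$ for the values around the cycle; let $\sigma'$ be obtained from $\sigma$ by exchanging the values at positions $p$ and $q$, and set $x=b_p$, $y=b_q$. Any window of $r$ consecutive positions containing both of $\{p,q\}$, or neither, contributes identically to $P_r(\sigma)$ and $P_r(\sigma')$; only windows containing exactly one of them are affected. Writing
\[C(p,q)=\sum_{w\ni p,\,w\not\ni q}\prod_{i\in w\setminus\{p\}}b_i,\qquad D(p,q)=\sum_{w\ni q,\,w\not\ni p}\prod_{i\in w\setminus\{q\}}b_i,\]
a short calculation gives
\[P_r(\sigma';\av)-P_r(\sigma;\av)=(x-y)\bigl(D(p,q)-C(p,q)\bigr).\]
In particular, assuming $x>y$, the transposition strictly increases $P_r$ precisely when the ``environment'' at $q$ exceeds that at $p$.

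With this in hand the plan is a hill-climbing argument: starting from an arbitrary $\sigma$, repeatedly perform $P_r$-increasing transpositions, and terminate at a local maximum, which we aim to identify with $\greedy$ (modulo the rotation-and-reversal symmetries that preserve $\greedy$). As a sanity check, $r=2$ can be handled cleanly because the environments reduce to $C=b_{p-1}+b_{p+1}$ and $D=b_{q-1}+b_{q+1}$, and an elementary case analysis forces the circular symmetric ordering, recovering the classical maximisation of the sum of adjacent products on a cycle. For general $r$ one checks that $\greedy$ is a local maximum by noting that any adjacent transposition at $\greedy$ moves a larger value into a position whose environment is no larger, as a consequence of the symmetric ``bitonic'' structure of $\greedy$ relative to the decreasing sequence $\av$.

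The main obstacle is the step from local to global optimality: showing that $\greedy$ is the \emph{unique} local maximum of $P_r$ under single transpositions (up to rotations and reversals). Single-transposition hill-climbing can in principle get trapped at a spurious stationary configuration, and I would try to address this in two complementary ways. The first is to enlarge the move set to include cyclic $3$-rotations or general block swaps, and to show that any non-greedy arrangement admits some such move which strictly increases $P_r$. The second is an induction on $m$: try to prove that in every maximiser of $P_r$ the two largest values $a_0,a_1$ must be cyclically adjacent (and dually for $a_{m-2},a_{m-1}$), then contract the adjacent pair into a single weighted super-vertex to reduce to an $(m-1)$-instance. The adjacency claim itself seems to require a subtler argument combining the swap identity with the full monotonicity of $\av$, and this is presumably where the conjecture needs genuinely new combinatorial input beyond what follows from pairwise swaps, explaining why the authors must resort to case-checking for $m\le 7$.
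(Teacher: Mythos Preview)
The statement you are attempting is Conjecture~\ref{conj:greedy}, which the paper explicitly leaves \emph{open} in full generality; what the paper actually proves is the partial result Theorem~\ref{thm:greedy3}, covering only $r\le 3$ and $m-3\le r\le m$ (for all $m$). So your proposal is not competing against a proof in the paper---none exists---and you yourself correctly locate the genuine gap: the passage from local to global optimality under transpositions is unresolved, and your two suggested remedies (enlarging the move set, or inducting via contraction of adjacent extremes) are programmes rather than arguments. As written, the proposal is an outline that stops exactly where the difficulty lies.

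Two points of comparison with what the paper does establish. First, the paper does \emph{not} ``resort to case-checking for $m\le 7$''. Its method is induction on $m$ for the fixed values $r=2$ and $r=3$: one writes $P_r(\sigma;\bs{a}_{m+1})=P_r(\hat\sigma;\av)+R(\sigma;\bs{a}_{m+1})$, where $\hat\sigma$ is the permutation obtained by deleting the position of $a_m$, and shows that $\greedy(m+1)$ simultaneously maximises both summands (the first by the inductive hypothesis, the second by direct analysis of the residual $R$). The duality $a^{-1}P_r(\sigma;\av)=P_{m-r}(\sigma;\av^{-1})$ then transfers these to $r=m-2,m-3$, and the coverage of all $r$ when $m\le 7$ is an incidental consequence. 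This is a rather different mechanism from your swap-based hill climb: it never needs to compare two arbitrary permutations, only to peel off the smallest element. Second, your swap identity is correct and your claim that $\greedy$ is a local maximum is plausible, but local maximality under single transpositions is well known not to force global maximality for such cyclic functionals, so the hill-climbing route as stated does not close the gap. The cases $4\le r\le m-4$ remain open.
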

The cases $r=1$ and $r=m$ in Conjecture \ref{conj:greedy} are trivially true.  The case $r=2$ is not difficult to prove, and appears as early as \cite{Abram0}.  Such facts are termed \emph{circular rearrangement inequalities} in \cite{Yu} (see also \cite{Abram}).  We present a simple proof for the case $r=2$ and also give a (non-trivial) proof in the case $r=3$.  Observe that 
\[\dfrac{\prod_{i=0}^{r-1} b_j}{\prod_{i=0}^{m-1} b_j}=\prod_{i=r}^{m} \frac{1}{b_j},\]
from which we conclude that if $a$ is the product of all elements of $\av$ then 
\begin{equation}
\label{trick1}
a^{-1}P_r(\sigma;\av)=P_{m-r}(\sigma;\av^{-1}),
\end{equation}
where $\av^{-1}=(a_0^{-1},\dots, a_{m-1}^{-1})$.   This observation together with the aforementioned results for $r\le 3$ gives rise to the following.
\begin{theorem}
\label{thm:greedy3}
The conclusion of Conjecture \ref{conj:greedy} holds for $(m,r)$ such that $r\le 3$ or $m-3\le r\le m$.
\end{theorem}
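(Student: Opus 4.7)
The plan is to reduce Theorem \ref{thm:greedy3} to Conjecture \ref{conj:greedy} in the two non-trivial cases $r \in \{2,3\}$ and then prove those. The cases $r \in \{0,1,m-1,m\}$ are immediate, since $P_r$ is then independent of $\sigma$: $P_0 = m$, $P_1 = \sum_i a_i$, $P_m = m \prod_i a_i$, and $P_{m-1} = (\prod_i a_i)(\sum_i a_i^{-1})$. To handle $r \in \{m-2, m-3\}$ I would invoke (\ref{trick1}) together with the cyclic-plus-reversal invariance of $P_s$: let $\tau(i) := m-1-i$ and $\bs{b} := (\av^{-1})_\tau$, so that $\bs{b}$ is decreasing whenever $\av$ is. A direct computation gives $P_s(\sigma;\av^{-1}) = P_s(\sigma\tau;\bs{b})$, while $\greedy$ and $\greedy\tau$ give the same cyclic arrangement up to reversal and hence the same $P_s$-value. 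Combining with (\ref{trick1}) shows that Conjecture \ref{conj:greedy} for $(m,r)$ is equivalent to Conjecture \ref{conj:greedy} for $(m, m-r)$.

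For $r = 2$, I would present the standard exchange argument (cf.~\cite{Abram0, Yu}): in any maximising cyclic arrangement of $(a_i)$, $a_0$ must be adjacent to $a_1$ and $a_2$ --- otherwise swapping a less-valuable neighbour of $a_0$ against $a_1$ or $a_2$ would strictly increase $P_2$. Recursing on the arc from $a_1$ to $a_2$ avoiding $a_0$ then forces the circular symmetric ordering.

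The case $r = 3$ is the main obstacle. Writing $v_k := a_{\sigma^{-1}(k)}$, a swap of positions $i,j$ at cyclic distance at least $3$ changes $P_3 = \sum_k v_k v_{k+1} v_{k+2}$ by $(v_j - v_i)(T_i - T_j)$, where $T_i := v_{i-2}v_{i-1} + v_{i-1}v_{i+1} + v_{i+1}v_{i+2}$; so local optimality against these long-range swaps requires the $T_i$ and $v_i$ to be sorted consistently, a condition one can verify directly for $\greedy$. The real difficulty is the short-range swaps (positions at cyclic distance $1$ or $2$), which perturb five or six triples simultaneously and do not reduce to a clean two-variable rearrangement. The plan is to complete the argument by a finite case analysis of these short-range configurations, showing that any non-greedy arrangement admits an improving local move. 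A structurally cleaner route would be to decompose $P_3$ into a positive combination of $P_2$-type quantities and reduce directly to the $r=2$ case, but I do not see such a decomposition and expect the case-analysis route to be the principal technical hurdle.
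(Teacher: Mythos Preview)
Your reduction to $r\in\{2,3\}$ via \eqref{trick1} and reversal invariance matches the paper, and your exchange argument for $r=2$ is a correct and standard alternative to the paper's induction on $m$.

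The gap is in $r=3$. You have a plan but not a proof: the short-range case analysis is not carried out, and it is not clear it closes. Observing that long-range swaps at a maximiser force $T_i$ to be comonotone with $v_i$, and that $\greedy$ satisfies this, does not show $\greedy$ is the \emph{only} local maximum; you still need either to prove that every non-greedy arrangement admits an improving swap, or to show that the full set of swap constraints (long- and short-range together) pins down $\greedy$ uniquely up to rotation and reversal. Neither is done, and the short-range constraints depend on five or six neighbouring values in a way that does not obviously factor.

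The paper sidesteps this by induction on $m$, exactly as for $r=2$. Remove the smallest entry $a_m$: then $P_3(\sigma,\bs{a}_{m+1}) = P_3(\hat\sigma,\av) + R(\sigma,\bs{a}_{m+1})$, where (writing $j=j_\sigma$)
\[
R = b_{j-2}b_{j-1}a_m + b_{j-1}a_m b_{j+1} + a_m b_{j+1}b_{j+2} - b_{j-2}b_{j-1}b_{j+1} - b_{j-1}b_{j+1}b_{j+2}.
\]
A monotonicity check shows $R$ is decreasing in each of $b_{j\pm1},b_{j\pm2}$, so at a maximum the four neighbours of $a_m$ are $\{a_{m-4},\dots,a_{m-1}\}$; and $\greedy(m+1)$ has this property, with $\widehat{\greedy(m+1)}=\greedy(m)$ handling the first term by induction. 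On that restricted set one rewrites
\[
R = P_3\bigl(I_5,(b_{j-2},b_{j-1},a_m,b_{j+1},b_{j+2})\bigr) - P_3\bigl(I_4,(b_{j-2},b_{j-1},b_{j+1},b_{j+2})\bigr),
\]
and applies \eqref{trick1}: the first term becomes a constant times a $P_2$ on five elements, handled by the already-established $r=2$ case, and the second becomes a $P_1$ on four elements, which is permutation-invariant. This is exactly the ``reduce $P_3$ to $P_2$-type quantities'' idea you said you could not see; it works not on $P_3$ globally but on the inductive remainder after peeling off the smallest entry.
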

It is immediate from Theorem \ref{thm:greedy3} that the conclusion of Conjecture \ref{conj:greedy} holds for all $r\in [m]$ when $m\le 7$.

The relevance of Conjecture \ref{conj:greedy} and Theorem \ref{thm:greedy3} to {\bf Q2} can be seen from the following explicit formula for the velocity, in which $I$ denotes the identity permutation and $\bs{\rho}_m=(\rho_0,\rho_1,\dots, \rho_{m-1})$.  

\begin{proposition}
\label{prp:explicit}
For CBD with transition probabilities $\pv\in (0,1)^m$ such that $\bro<1$ we have 
\begin{equation}
v(\pv)=\dfrac{1-\bro}{1-\bro+\frac{2}{m}\sum_{r=1}^{m}P_{r}(I,\bs{\rho}_m)}.\label{awesome}
\end{equation}
\end{proposition}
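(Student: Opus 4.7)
The strategy has two parts: reduce $v(\pv)$ to the expected hitting time $E_0[T_m]$ (where $T_m$ is the first hitting time of~$m$ from~$0$), and then evaluate that expected hitting time via the standard one-step recursion iterated around one period. First I would justify $v(\pv)=m/E_0[T_m]$: this is essentially the content of Lemma~\ref{lem:Tspeed} cited in the proof of Proposition~\ref{prp:sign}. The environment is invariant under shifts by $m$, so by the strong Markov property the increments $T_{(k+1)m}-T_{km}$ are i.i.d.~copies of $T_m$, with finite mean when $\bro<1$; the SLLN along this renewal sequence then gives $X_n/n\to m/E_0[T_m]$.

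To compute $E_0[T_m]=\sum_{i=0}^{m-1}e_i$ with $e_i:=E_i[T_{i+1}]$, I would condition on the first step from~$i$ to obtain $e_i=1+(1-p_i)(e_{i-1}+e_i)$, which rearranges to the standard birth-death identity $e_i=1/p_i+\rho_i e_{i-1}$. Translation invariance forces $e_i$ to depend only on $i\Mod m$, so iterating the recursion $m$ times and using $e_{i-m}=e_i$ yields
\[(1-\bro)\,e_i=\sum_{k=0}^{m-1}\frac{\rho_i\rho_{i-1}\cdots\rho_{i-k+1}}{p_{i-k}}.\]

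Summing over $i\in\{0,\dots,m-1\}$, reindexing by $\ell=i-k$, and using $1/p_\ell=1+\rho_\ell$ splits the resulting double sum into two pieces: the ``$1$'' contribution produces $\sum_{k=0}^{m-1}P_k(I,\bs{\rho}_m)$, while the ``$\rho_\ell$'' contribution, which extends each cyclic subproduct by one factor on the left, produces $\sum_{k=1}^{m}P_k(I,\bs{\rho}_m)$. Applying $P_0(I,\bs{\rho}_m)=m$ and $P_m(I,\bs{\rho}_m)=m\bro$ and collecting terms gives
\[(1-\bro)\,E_0[T_m]=m(1+\bro)+2\sum_{r=1}^{m-1}P_r(I,\bs{\rho}_m).\]
Substituting into $v=m/E_0[T_m]$ and using $m(1+\bro)=m(1-\bro)+2P_m(I,\bs{\rho}_m)$ to absorb the ``$m(1+\bro)$'' term then yields~\eqref{awesome}. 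The only delicate step is the bookkeeping in this last computation: under $1/p_\ell=1+\rho_\ell$ one must verify that each cyclic subproduct of length $r\in\{1,\dots,m-1\}$ appears with multiplicity~$2$ while the degenerate lengths $r=0$ and $r=m$ each appear with multiplicity~$1$. Apart from this, everything reduces to standard Markov-chain arguments.
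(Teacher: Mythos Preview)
Your argument is correct and complete. The starting point (the renewal identity $v=m/\E_0[T_m]$) and the one-step recursion $e_i=1/p_i+\rho_i e_{i-1}$ are the same as in the paper, but from there the two proofs diverge. The paper casts the recursion as the linear system $M\mathbf{e}=\mathbf{1}$, solves it via Cramer's rule, computes the determinants $|M|$ and $|M^{(j)}|$ explicitly, and then massages the resulting ratio of products of $p_i$ and $(1-p_i)$ into the form~\eqref{awesome} using the identity $p_j+(1-p_j)=1$. You instead iterate the recursion $m$ times and exploit periodicity ($e_{i-m}=e_i$) to solve for $e_i$ directly, then sum over $i$ and split via $1/p_\ell=1+\rho_\ell$. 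Your route is shorter and keeps everything expressed in the $\rho$-variables throughout, so the cyclic products $P_r(I,\bs{\rho}_m)$ emerge without the intermediate determinant manipulations; the paper's approach has the minor advantage that the numerator $1-\bro$ appears transparently as the determinant $|M|/\prod p_i$, but otherwise your derivation is the more economical of the two.
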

Since for any $r$, sums over starting indices $k$ of consecutive products of $\rho_\cdot$ are invariant under rotations and reversals (reversing the order of $\pv$), we can immediately conclude from \eqref{awesome} that $v(\pv)$ is invariant under rotations and reversals of the elements of $\pv$, as claimed earlier.  In particular, when $m=3$ there is only one possible velocity, since all permutations are combinations of

As noted earlier, we find the fact that the speed is invariant under reversals to be  somewhat surprising, and is not at all obvious from other expressions for the velocity.   For example, as in Lemma \ref{lem:circ} in Section \ref{sec:speed_formulae} below, the velocity can also be written as $v(\pv)=\sum_{i=0}^{m-1}\pi_i (2p_i-1)$, where $\bs{\pi}=(\pi_0,\dots, \pi_{m-1})$ is the stationary distribution of the chain $\xc_n=X_n \Mod m$.  This stationary distribution behaves ``nicely'' under rotations but \emph{not} under reversal of the elements of $\pv$- see Example \ref{exa:m3}  in Section \ref{sec:speed_formulae} below.   

Given $\pv\in (0,1)^m$ with non-increasing entries, and $\sigma\in \mc{S}_m$, let $v(\sigma;\pv)=v((\pv)_{\sigma})$.  As noted above, the following is an immediate corollary of Conjecture \ref{conj:greedy} (and Proposition \ref{prp:explicit}).  It says that the greedy permutation minimises the speed.  
\begin{conj}[Greedy is least speedy]
\label{conj:min}
For any $\pv$ with non-increasing entries, such that $\bro<1$ (so all possible speeds will be positive), then for every $\sigma\in \mc{S}_m$,
\[ v(\greedy; \pv)\le v(\greedy; \pv)\] 
\end{conj}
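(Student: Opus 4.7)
The plan is to combine Proposition \ref{prp:explicit} with Conjecture \ref{conj:greedy}, exactly as flagged in the sentence preceding the statement. Writing $S(\sigma) := \sum_{r=1}^{m} P_r(\sigma;\bs{\rho}_m)$ and using the identity $P_r(I; (\bs{\rho}_m)_\sigma) = P_r(\sigma; \bs{\rho}_m)$, the formula \eqref{awesome} for the chain with transition probabilities $(\pv)_\sigma$ becomes
\[
v(\sigma;\pv)\;=\;\frac{1-\bro}{1-\bro+\tfrac{2}{m}\, S(\sigma)}.
\]
Since $\bro=\prod_i \rho_i$ is invariant under permutations of $\pv$, $\bro<1$, and $S(\sigma)>0$, minimising $v(\cdot;\pv)$ over $\sigma\in\mc{S}_m$ is equivalent to maximising $S(\sigma)$. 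I would therefore aim to prove $P_r(\greedy;\bs{\rho}_m)\ge P_r(\sigma;\bs{\rho}_m)$ for each $r\in[m]$ separately and then sum.

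The only subtlety is a sign mismatch: Conjecture \ref{conj:greedy} requires a \emph{decreasing} input, but $\rho_i=(1-p_i)/p_i$ is a decreasing function of $p_i$, so when $\pv$ has non-increasing entries the vector $\bs{\rho}_m$ is non-decreasing. To handle this I would work with the reversed vector $\bs{b}:=(\rho_{m-1},\rho_{m-2},\ldots,\rho_0)$, which is non-increasing. Conjecture \ref{conj:greedy} applied to $\bs{b}$ directly yields $P_r(\greedy;\bs{b})\ge P_r(\sigma;\bs{b})$ for every $\sigma$. Because $\{\bs{b}_\sigma:\sigma\in\mc{S}_m\}$ and $\{(\bs{\rho}_m)_\sigma:\sigma\in\mc{S}_m\}$ are both the set of all orderings of the common multiset, one has $\max_\sigma P_r(\sigma;\bs{b})=\max_\sigma P_r(\sigma;\bs{\rho}_m)$. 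To transfer the value at $\greedy$ I would use the fact (noted after \eqref{awesome}) that $P_r(\sigma;\av)$ depends on $\av_\sigma$ only through its cyclic equivalence class up to reflection; a direct unpacking of the definition of $\greedy$ shows that $\bs{b}_\greedy$ is a cyclic rotation of $(\bs{\rho}_m)_\greedy$, so $P_r(\greedy;\bs{b})=P_r(\greedy;\bs{\rho}_m)$. Chaining these equalities with the above inequality gives $P_r(\greedy;\bs{\rho}_m)\ge P_r(\sigma;\bs{\rho}_m)$ for all $\sigma$ and $r$; summing in $r$ yields $S(\greedy)\ge S(\sigma)$, hence the conclusion $v(\greedy;\pv)\le v(\sigma;\pv)$.

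The main (and essentially the only) obstacle is Conjecture \ref{conj:greedy} itself, which remains open in general, so this argument delivers only a conditional proof. Theorem \ref{thm:greedy3} supplies the missing ingredient in enough cases to make the above argument unconditional whenever $m\le 7$, and the reduction is uniform in $r$ so any future extension of Theorem \ref{thm:greedy3} to a new pair $(m,r)$ automatically extends the partial knowledge we have toward Conjecture \ref{conj:min}.
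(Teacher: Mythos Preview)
Your proposal is correct and follows exactly the route the paper indicates: Conjecture~\ref{conj:min} is presented there as ``an immediate corollary of Conjecture~\ref{conj:greedy} (and Proposition~\ref{prp:explicit})'', and you have spelled out precisely that reduction, including the observation that the statement becomes unconditional for $m\le 7$ via Theorem~\ref{thm:greedy3}. One small correction: for odd $m$ the vector $\bs{b}_{\greedy}$ is not merely a cyclic rotation of $(\bs{\rho}_m)_{\greedy}$ but a rotation of its reversal (check $m=5$); since you already invoked invariance of $P_r$ under the full dihedral group this does not affect the argument.
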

For example, if $m=9$ and the $p_i$ are decreasing in $i$ with $\rho(\pv)<1$ then according to Conjecture \ref{conj:min}, for any permutation $\sigma\in \mc{S}_m$,
\[ v\big((p_0,p_2,p_4,p_6,p_8,p_7,p_5,p_3,p_1)\big)\le v((\pv)_\sigma).\]

As a consequence of Theorems \ref{thm:ae} and \ref{thm:greedy3} above we obtain the following (we omit the proof).
\begin{cor}
\label{cor:greedy6}
For $m\le 7$ and $\pv$ with non-increasing entries and $\rho<1$, the speed is minimised by the greedy permutation (i.e.~Conjecture \ref{conj:min} holds for $m\le 7$). 
\end{cor}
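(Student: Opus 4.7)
The plan is to deduce the corollary from Theorem \ref{thm:greedy3} using the velocity formula. Applying Proposition \ref{prp:explicit} to the permuted environment $\pv_\sigma$, and noting that $\gamma=\prod_i\rho_i$ is permutation invariant while a relabelling of the cyclic windows gives $P_r(I,(\bs{\rho}_m)_\sigma)=P_r(\sigma,\bs{\rho}_m)$, one obtains
\begin{equation*}
v(\sigma;\pv)=\frac{1-\gamma}{1-\gamma+\frac{2}{m}\sum_{r=1}^{m}P_r(\sigma,\bs{\rho}_m)}.
\end{equation*}
Since $\gamma<1$, this is strictly decreasing in $\sum_r P_r(\sigma,\bs{\rho}_m)$, so minimising $v(\sigma;\pv)$ over $\sigma\in\mc{S}_m$ is equivalent to maximising that cyclic-product sum.

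Because $\pv$ is non-increasing, $\bs{\rho}_m$ is non-decreasing, the opposite orientation from the one required in Theorem \ref{thm:greedy3}. I would bridge this by setting $\av=(\rho_{m-1},\dots,\rho_0)$ and introducing the reversal permutation $\tau(i)=m-1-i$, so that $\bs{\rho}_m=\av_\tau$. Unrolling the definition of $P_r$ gives the identity $P_r(\sigma,\bs{\rho}_m)=P_r(\sigma\tau,\av)$, and since $\sigma\mapsto\sigma\tau$ is a bijection on $\mc{S}_m$, $\max_\sigma P_r(\sigma,\bs{\rho}_m)=\max_{\sigma'}P_r(\sigma',\av)$. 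When $m\le 7$, every $r\in[m]$ satisfies $r\le 3$ or $r\ge m-3$, so Theorem \ref{thm:greedy3} identifies this common maximum as $P_r(\greedy,\av)$.

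It remains to check that $\greedy$ itself (not merely the conjugate $\greedy\tau$) attains the maximum on the $\bs{\rho}_m$ side. Here I would invoke the rotation- and reversal-invariance of $P_r(\cdot,\bs{x})$ as a function of the circular arrangement $\bs{x}_\sigma$ (the same invariance used right after \eqref{awesome} to make the speed reversal invariant). A short direct check shows that $(\bs{\rho}_m)_\greedy=(\rho_0,\rho_2,\dots,\rho_5,\rho_3,\rho_1)$ and $(\bs{\rho}_m)_{\greedy\tau}=(\rho_{m-1},\rho_{m-3},\dots,\rho_{m-4},\rho_{m-2})$ coincide up to a rotation and reversal of the cycle, since both place the smallest values of $\bs{\rho}_m$ on one arc and the largest on the opposite arc. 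Therefore $P_r(\greedy,\bs{\rho}_m)=P_r(\greedy\tau,\bs{\rho}_m)=P_r(\greedy,\av)$ for every $r$; summing over $r$ gives $v(\greedy;\pv)\le v(\sigma;\pv)$ for all $\sigma\in\mc{S}_m$.

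No individual step is hard; the argument is essentially a bookkeeping exercise combining Proposition \ref{prp:explicit} with Theorem \ref{thm:greedy3}. The only place that needs care is the orientation reconciliation between the decreasing input assumed in Theorem \ref{thm:greedy3} and the increasing $\bs{\rho}_m$ arising from a non-increasing $\pv$, and it is precisely the reversal invariance of the speed --- the feature emphasised earlier as somewhat surprising --- that makes this reconciliation automatic. Note that Theorem \ref{thm:ae} is not really invoked here beyond its implicit role of confirming that rotations and reversals exhaust the ``obvious'' symmetries of $v$; the quantitative content of the corollary comes entirely from Theorem \ref{thm:greedy3}.
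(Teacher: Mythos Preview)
Your argument is correct and matches the paper's intended route: the paper omits the proof, stating only that the corollary follows from Theorems~\ref{thm:ae} and~\ref{thm:greedy3} (via Proposition~\ref{prp:explicit}), and you have simply filled in the bookkeeping, including the careful orientation reconciliation between the decreasing $\av$ required by Theorem~\ref{thm:greedy3} and the non-decreasing $\bs{\rho}_m$ arising from a non-increasing $\pv$, which the paper leaves implicit. Your closing remark that Theorem~\ref{thm:ae} plays no essential quantitative role is also accurate.
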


\begin{rem}
Notice that for $m\le 7$  the ordering that minimises the speed is also the one that \emph{minimises}  (interpret the following with indices mod $m$),
\[\mc{L}(\pv):=\sum_{i=0}^{m-1} (p_{i+1}-p_{i})^2\]
 (expand the square, and note that only the sum of mixed terms depends on the order).  One might interpret this as saying that the speed is minimised by having a ``smooth'' ordering (a cyclic arrangement of the elements of $\pv$ that has no large jumps). \Enddef
\end{rem}

We stress that the maximiser of $P_r$ appears to be universal.  In other words we believe that the greedy permutation maximises $P_r$ for every $r$ and every $\av$ with decreasing entries.   In the language of \cite{Yu} this says that the \emph{circular symmetrical order} maximises $P_r$.  In \cite{Yu} it was shown that the so-called \emph{circular alternating order} minimises $P_2$.  
The following two examples (which can be verified by simply evaluating the cyclic products for all possible permutations) show that the minimal ordering is neither constant over $r$ for fixed $\av$, nor constant over $\av$ for fixed $r$. 
\begin{exa}
For the vector $\av=(9,7,6,5,4,3)$, the minimal value of $P_2(I; )$ is achieved by the ordering $(9,3,7,5,6,4)$ (and not by the ordering $(9 , 3,  6,  7 ,  4  , 5)$) while the minimal value of $P_3(I; )$ is achieved by the ordering $(9 , 3,  6,  7 ,  4  , 5)$ (and not by  $(9,3,7,5,6,4)$).\Enddef
\end{exa}
\begin{exa}
For the vector $\av=(10,5,4,3,2,1)$, the minimal value of $P_3(I; )$ is achieved by the ordering $(10,1,\bs 4,\bs 5,2,3)$ (and not by the ordering $(10,1,\bs 5,\bs 4,2,3)$). For the vector $\av'=(10,9,6,5,3,1)$ the minimal value of $P_3(I;)$ is achieved by the ordering $(10,1,\bs 9, \bs 6,3,5)$ (and not by  $(10,1,\bs 6,\bs 9,2,3)$).
\end{exa}
Since the permutations which minimise products of $r$ consecutive terms in the cycle are not constant over $r$, The above observations don't give any conclusion for permutations that maximise the speed.  Nevertheless, we have the following.
\begin{exa} 
\label{exa:max_speeds}$\,$
\begin{itemize}
\item[(i)] If $\pv=(9,8,7,6,5,4,3)/10$ then the maximum value of the speed $v((\pv)_\sigma)$ is obtained by
$v\big((6,7,4,9,3,8,5)/10\big)\approx 0.19857$, corresponding to the permutation $\sigma=(35\bs{10}624)$.
\item[(ii)] If $\pv'=(8,7,6,5,4,3.5,3)/10$  then the maximum value of the speed $v((\pv')_{\sigma'})$ is obtained by 
$v\big((6,5,3.5,8,3,7,4)/10\big)\approx 0.04675$, corresponding to the permutation $\sigma'=(35\bs{01}624)$.
\end{itemize}
Moreover, in case (i) above the speed is not maximised at $v((\pv)_{\sigma'})\approx 0.19787$, and in case (ii) above the speed is not maximised at $v((\pv')_{\sigma})\approx 0.04668$.\Enddef
\end{exa}

The remainder of this paper is organised as follows:  In Section \ref{sec:discussion} we further discuss the context of our results: we present some elementary (implicit) speed formulae, compare results about the velocity for the cyclic birth and death chain to that of a related model of random walk in random environment, and briefly discuss the central limit theorem.  In Section \ref{sec:awesome_formula} we prove Proposition \ref{prp:explicit}.  To do this we follow an approach that will be familiar to researchers in the area of \emph{Random Walk in Random Environment} (RWRE), and then manipulate the resulting expression to get \eqref{awesome}.  Some understanding of discrete-time Markov chains (specifically birth and death chains) is required to understand Sections \ref{sec:discussion} and  \ref{sec:awesome_formula}.  The reader who is happy to start with \eqref{awesome} as given can proceed directly to Sections \ref{sec:ae} and \ref{sec:r3} where 
we prove Theorems \ref{thm:ae} and \ref{thm:greedy3}   respectively.   

 Theorem \ref{thm:ae} will be proved by showing: (i) that the speed is invariant to rotations and reversal of the elements of $\pv$ (the former is trivial, while we find the latter to be rather surprising), and; (ii) for typical $\pv$ these kinds of permutations are the only ones which do not change the speed.  
 
 Theorem \ref{thm:greedy3} will be proved by induction on $m$ for $r=2,3$.  The cases $3<r<m-3$ remain open.


\section{Discussion}
\label{sec:discussion}
In this section we further discuss the context of our results.  Let us begin with some simple (and standard) implicit formulas for the velocity.
\subsection{Elementary speed formulae}
\label{sec:speed_formulae}
Let $\xc_n=X_n \Mod m$. Then $\xc=(\xc_n)_{n \ge 0}$ is also an irreducible discrete-time Markov chain (typically non-reversible), with finite state space $\{0,1,\dots, m-1\}$ and transition probabilities $p_{0,m-1}=1-p_0$, $p_{m-1,0}=p_{m-1}$, and $p_{i,i+1}=p_i$ for $i<m-1$ and $p_{i,i-1}=1-p_i$ for $i>1$. Let $\bs{\pi}=(\pi_i)_{i=0}^{m-1}$ denote the stationary distribution of $\xc$ (which depends on $\pv$). Then we have the following.
\begin{lemma}
\label{lem:circ}
$v(\pv)=\sum_{i=0}^{m-1}\pi_i (2p_i-1)$.
\end{lemma}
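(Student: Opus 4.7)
The plan is to exploit the fact that the modular chain $\xc$ is an irreducible, finite-state Markov chain, so it is ergodic, and then write $X_n$ as a sum of one-step increments whose conditional expectations depend only on the current state of $\xc$.

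First, I would express the displacement as a telescoping sum
\[X_n = X_0 + \sum_{k=0}^{n-1}\Delta_k, \qquad \Delta_k := X_{k+1}-X_k \in \{-1,+1\},\]
and observe that, by the CBD transition rule, $\E[\Delta_k \mid \mc{F}_k] = 2p_{\xc_k}-1$, where $\mc{F}_k = \sigma(X_0,\ldots,X_k)$. This lets me split
\[\sum_{k=0}^{n-1}\Delta_k = M_n + A_n, \qquad M_n := \sum_{k=0}^{n-1}\bigl(\Delta_k - (2p_{\xc_k}-1)\bigr), \qquad A_n := \sum_{k=0}^{n-1}(2p_{\xc_k}-1),\]
so that $M_n$ is an $\mc{F}_k$-martingale with increments bounded by $2$.

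Next, I would control the two pieces separately. For $M_n$, the martingale strong law for bounded increments (or Azuma--Hoeffding plus Borel--Cantelli) gives $M_n/n \to 0$ almost surely. For $A_n$, since $\xc$ is an irreducible Markov chain on the finite state space $\{0,1,\ldots,m-1\}$ with unique stationary distribution $\bs{\pi}$, Birkhoff's ergodic theorem (applied to the function $i \mapsto 2p_i-1$) yields
\[\frac{A_n}{n} \;=\; \sum_{i=0}^{m-1}(2p_i-1)\cdot\frac{1}{n}\sum_{k=0}^{n-1}\indic{\xc_k=i} \;\xrightarrow[n\to\infty]{\text{a.s.}}\; \sum_{i=0}^{m-1}\pi_i(2p_i-1).\]
Combining these and dividing by $n$ gives $X_n/n \to \sum_i \pi_i(2p_i-1)$ a.s., which by Proposition \ref{prp:sign} must equal the deterministic limit $v(\pv)$.

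There is no real obstacle here: the whole argument is a direct application of the ergodic theorem for finite Markov chains, together with the martingale SLLN. The only mild subtlety is ensuring that the ergodic limit is independent of the starting state $X_0$, but this is immediate from irreducibility of $\xc$.
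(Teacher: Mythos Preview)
Your argument is correct and complete. It differs mildly from the paper's proof, which instead of a Doob decomposition uses an explicit i.i.d.\ construction: the paper pre-samples independent variables $(\Delta_{j,i})$ with law $\P(\Delta_{j,i}=1)=p_j$, writes $X_n=\sum_{j}\sum_{i=1}^{N_n(j)}\Delta_{j,i}$ where $N_n(j)$ is the occupation time of state $j$ by $\xc$, and then applies the ordinary i.i.d.\ SLLN to each inner sum together with $N_n(j)/n\to\pi_j$. Your martingale-plus-compensator split achieves the same effect more abstractly: the paper trades the martingale SLLN for the elementary i.i.d.\ SLLN at the cost of an explicit coupling, while your route avoids that construction but invokes a (still standard) martingale convergence result. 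Both arguments rest on the same ergodic fact for $\xc$, and neither has any real advantage over the other here.
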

Readers familiar with random walk in random environment might interpret Lemma \ref{lem:circ} as a formula for the speed given in terms of the environment viewed from the particle.  One can find an explicit (albeit complicated) formula for $\bs{\pi}$, and hence for $v$, by solving a recursion for mean return times, but we will not present this here.   The following example however demonstrates that the invariance of the speed (under all rotations) in the case $m=3$ is not at all trivial.
\begin{exa}
\label{exa:m3}
For the case $m=3$, the stationary distribution satisfies (with subscripts interpreted $\Mod 3$) for $i=0,1,2$,
\[\pi_i(p_0,p_1,p_2)=\frac{p_{i+1}p_{i+2}-p_{i+1}+1}{p_0p_1+p_0p_2+p_1p_2-p_0-p_1-p_0+3}.\]
The denominator $d=d(\{p_0,p_1,p_2\})$ is invariant under permutations.  Note that e.g.
\[\pi_{0}(p_2,p_1,p_0)=\dfrac{p_{1}p_{0}-p_{1}+1}{d},\]
which is not equal to any of the $\pi_i(p_0,p_1,p_2)$ in general.  E.g.~$\bs{\pi}(0.4,0.6,0.8)=(22,13,21)/56$, while  $\bs{\pi}(0.8,0.6,0.4)=(16,23,17)/56$.  Nevertheless, $v(0.4,0.6,0.8)=v(0.8,0.6,0.4)=27/140$.
\end{exa}
\begin{proof}[Proof of Lemma \ref{lem:circ}]
Let $(\Delta_{j,i}:j=0,\dots, m-1, i\in \N)$ be independent random variables with \[\P(\Delta_{j,i}=1)=p_{j}=1-\P(\Delta_{j,i}=-1).\] 

For $j=0, 1, \dots, m-1$, and $n\ge 1$, let $N_n(j)=\#\{r<n: X_r \Mod m =j\}$. Then 
\begin{equation*}
X_n=\sum_{j=0}^{m-1} \sum_{i=1}^{N_n(j)}\Delta_{j,i}.
\end{equation*}
Thus,
\begin{equation*}
n^{-1}X_n=\sum_{j=0}^{m-1} \frac{N_n(j)}{n}\frac{1}{N_n(j)}\sum_{i=1}^{N_n(j)}\Delta_{j,i}.
\end{equation*}
Note that $N_n(j)$ is the number of visits by (the irreducible, finite-state DTMC) $\xc$ to $j$ prior to time $n$. Therefore $n^{-1}N_n(j)\to \pi_j$ almost surely. Since $\Delta_{j,i}$ are independent this implies that 
\[n^{-1}X_n\to \sum_{j=0}^{m-1} \pi_j \E[\Delta_{j,1}]=\sum_{j=0}^{m-1} \pi_j(2p_j-1).\]
\end{proof}
Let $T$ denote the first hitting time of $\{-m,m\}$ by the chain $X$, and let $h=\P(X_T=m)=1-\P(X_T=-m)$. Then a standard resistance calculation gives $h=(1+\bro)^{-1}$, 
and we have the following formula.
\begin{lemma}
\label{lem:Tspeed}
$v(\pv)=\dfrac{\E[X_T]}{\E[T]}=m \cdot \dfrac{2h-1}{\E[T]}$.
\end{lemma}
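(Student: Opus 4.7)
The plan is to exploit the regeneration structure of the CBD, essentially making rigorous the sketch already given in the proof of Proposition~\ref{prp:sign}. Set $T_0=0$ and recursively $T_{k+1}=\inf\{n>T_k : |X_n-X_{T_k}|=m\}$, so $T=T_1$ in the notation of the lemma. Because the transition probabilities satisfy $p_{i,i+1}=p_{i \Mod m}$, the chain observed at $X_{T_k}$ sees an environment that is a shift by a multiple of $m$ of the original environment, which is unchanged. Hence the strong Markov property together with periodicity implies that the pairs $\bigl(X_{T_{k+1}}-X_{T_k},\, T_{k+1}-T_k\bigr)_{k\ge 0}$ are i.i.d., with the same law as $(X_T, T)$.

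Next I would compute $\E[X_T]$ via a standard gambler's ruin / electrical network calculation: on the finite state space $\{-m,-m+1,\dots,m-1,m\}$ with $\pm m$ absorbing, the hitting probability $h=\P(X_T=m)$ solves the usual recursion, yielding $h=(1+\bro)^{-1}$, so $\E[X_T]=m(2h-1)$. The same finite-state observation shows that $T$ has exponential tails and in particular $\E[T]<\infty$ (the chain, absorbed at $\pm m$, started from $0$, is an irreducible finite-state absorbing chain). Thus the SLLN applied to the i.i.d.\ sequences above gives, almost surely,
\[\frac{X_{T_k}}{k}\to \E[X_T]=m(2h-1), \qquad \frac{T_k}{k}\to \E[T],\]
and therefore
\[\frac{X_{T_k}}{T_k}\to \frac{m(2h-1)}{\E[T]}.\]

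Finally I would pass from the subsequence $(T_k)$ to the full sequence. For any $n$, let $k(n)=\max\{k: T_k\le n\}$; by construction $|X_n-X_{T_{k(n)}}|\le m-1$, and $T_{k(n)}/n\to 1$ a.s.\ (since $T_k/k$ converges to a finite positive limit). Combining these with Proposition~\ref{prp:sign}, which already guarantees that $X_n/n$ converges a.s.\ to the deterministic limit $v(\pv)$, forces $v(\pv)=m(2h-1)/\E[T]=\E[X_T]/\E[T]$, as claimed.

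The only step requiring any delicacy is the verification that the regeneration increments are genuinely i.i.d., which is immediate once one notes that the environment shifts by a multiple of $m$ at each $T_k$; every other ingredient (gambler's ruin for $h$, finiteness of $\E[T]$, the interpolation from the subsequence to $n$) is routine. There is no substantive obstacle here, and indeed the lemma serves mainly as a bridge to the more delicate algebraic identity \eqref{awesome} that is established later.
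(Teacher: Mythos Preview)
Your proposal is correct and follows essentially the same approach as the paper: define the regeneration times $T_k$, use the i.i.d.\ structure of the increments $(X_{T_{k+1}}-X_{T_k},\,T_{k+1}-T_k)$ together with the SLLN, and identify the resulting ratio with $v(\pv)$ via Proposition~\ref{prp:sign}. You provide more explicit detail (the i.i.d.\ verification, finiteness of $\E[T]$, the interpolation step) than the paper's terse version, but the argument is the same.
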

\begin{proof}
Let $T^{(0)}=0$, and for $i\ge 1$ let  $T^{(i)}=\inf\{k> T^{(i-1)}:X_k-X_{T^{(i-1)}}\in \{-m,m\}\}$.  Since $n^{-1}X_n \to v$ almost surely we have that $X_{T^{(i)}}/T^{(i)}\to v$ as $i \to \infty$.  By the law of large numbers, $i^{-1}T^{(i)}\to \E[T]$ and $i^{-1}X_{T^{(i)}}\to \E[X_T]=m(\P(X_{T}=m)-\P(X_{T}=-m))$.
\end{proof}  
Let $T_+$ denote the first hitting time of $m$ by the chain $X$. Then standard renewal  arguments give the following.
\begin{lemma}
\label{lem:T+speed}
If $\bro<1$ then 
$v(\pv)=\dfrac{m}{\E[T_+]}$.
\end{lemma}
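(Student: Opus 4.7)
The plan is to express $T_+$ as a random sum of independent copies of $T$ (the hitting time of $\{-m,m\}$), so that Wald's identity can be combined with Lemma~\ref{lem:Tspeed}.

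First I would reuse the excursion structure from the proof of Lemma~\ref{lem:Tspeed}: set $T^{(0)}=0$ and $T^{(i)}=\inf\{k>T^{(i-1)}: X_k-X_{T^{(i-1)}}\in\{-m,m\}\}$. The pairs $(Y_i,\Delta_i):=(X_{T^{(i)}}-X_{T^{(i-1)}},\,T^{(i)}-T^{(i-1)})$ are i.i.d.\ by the strong Markov property applied at each $T^{(i-1)}$ together with cyclic shift invariance: because $X_{T^{(i-1)}}$ is a multiple of $m$, the post-$T^{(i-1)}$ trajectory, once translated in space by $-X_{T^{(i-1)}}$, has the same law as $X$ started at $0$. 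In particular, each $\Delta_i$ is distributed as $T$, and each $Y_i$ takes values $\pm m$ with $\P(Y_i=m)=h=1/(1+\bro)>1/2$, using $\bro<1$.

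Next I would consider the macro walk $S_n:=\sum_{i=1}^n Y_i$, which is $m$ times a right-biased simple random walk on $\Z$. Let $N:=\inf\{n\geq 1: S_n=m\}$. Since $h>1/2$, $N$ is a.s.\ finite and has mean $\E[N]=1/(2h-1)$ (the classical first-passage-time mean for a biased simple random walk, derivable by a one-step conditioning). By construction, $T_+=T^{(N)}=\sum_{i=1}^{N}\Delta_i$. Because $N$ is a stopping time for the filtration generated by $(Y_i,\Delta_i)_{i\geq 1}$, and both $\E[N]$ and $\E[T]$ are finite, Wald's identity yields $\E[T_+]=\E[N]\,\E[T]=\E[T]/(2h-1)$.

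Combining this with Lemma~\ref{lem:Tspeed}, which asserts $v(\pv)=m(2h-1)/\E[T]$, gives $v(\pv)=m/\E[T_+]$. I do not expect any serious obstacle: the cyclic structure makes the i.i.d.\ excursion claim immediate, finiteness of $\E[T]$ is already used in Lemma~\ref{lem:Tspeed}, and the hypotheses of Wald are easily verified. The only mild bookkeeping point is that $N$ must be recognised as a stopping time for the excursion filtration, which is immediate since it depends only on $(Y_1,\ldots,Y_n)$.
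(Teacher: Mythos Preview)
Your proof is correct: the identification $T_+=T^{(N)}$ holds because between consecutive $T^{(i)}$ the walk stays strictly inside an interval of length $2m$, so the first visit to $m$ must occur at some $T^{(i)}$; the i.i.d.\ structure of the pairs $(Y_i,\Delta_i)$ together with $\{N\le n\}\in\sigma(Y_1,\dots,Y_n)\subset\mathcal F_n$ makes Wald legitimate; and $\E[N]=1/(2h-1)$ is the standard first-passage mean for a right-biased simple random walk.

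However, the paper takes a shorter and more direct route.  Rather than passing through Lemma~\ref{lem:Tspeed} and Wald, it introduces the renewal times $T_+^{(0)}=0$, $T_+^{(i)}=\inf\{k>T_+^{(i-1)}:X_k-X_{T_+^{(i-1)}}=m\}$, notes that their increments are i.i.d.\ copies of $T_+$ (same cyclic-shift argument you use), and then simply repeats the law-of-large-numbers reasoning from Lemma~\ref{lem:Tspeed}: since $X_{T_+^{(i)}}=mi$ and $i^{-1}T_+^{(i)}\to\E[T_+]$, the already-established limit $X_n/n\to v$ gives $v=m/\E[T_+]$ along this subsequence.  Your approach buys an explicit relation $\E[T_+]=\E[T]/(2h-1)$, which is pleasant but not needed; the paper's approach avoids Wald and the biased-SRW hitting-time computation entirely.
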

\begin{proof}
Let $T_+^{(0)}=0$ and for $i\ge 1$ let  $T_+^{(i)}=\inf\{k> T_+^{(i-1)}:X_k-X_{T_+^{(i-1)}}=m\}$, which is finite almost surely since $\bro<1$.  Now proceed as in the proof of Lemma \ref{lem:Tspeed}.
\end{proof}
Each of the above representations for $v$ is standard, but we would describe as implicit in the sense that $\bs{\pi}$ in Lemma \ref{lem:circ} and the expectations in the denominators in Lemmas \ref{lem:Tspeed} and \ref{lem:T+speed} are not explicit functions of $\pv$.  Nevertheless, we will use Lemma \ref{lem:T+speed} to  prove Proposition \ref{prp:explicit}.  It is intuitively obvious that for $\bro<1$ the denominator in Lemma \ref{lem:T+speed} is strictly decreasing in each $p_i$.  This can be made rigorous via a simple coupling argument to obtain the following.
\begin{lemma}
\label{lem:strictmono}
$v(\pv)$ is strictly increasing in each $p_i\in (0,1)$.
\end{lemma}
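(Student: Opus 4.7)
By Lemma \ref{lem:T+speed}, when $\bro<1$ we have $v(\pv)=m/\E_\pv[T_+]$, reducing the claim to showing $\E_\pv[T_+]$ is strictly decreasing in each $p_i$.  The case $\bro=1$ follows directly from Proposition \ref{prp:sign}, since $\bro$ is itself strictly decreasing in each $p_i$ so any positive perturbation of $p_i$ at such a point lands in the regime $v>0$; the case $\bro>1$ reduces to $\bro<1$ by considering the reflected chain $-X$, which is again a CBD, with product $\bro^{-1}$ and velocity $-v(\pv)$.

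For $\bro<1$, fix $i$ and set $\pv'$ equal to $\pv$ except that $p'_i=p_i+\epsilon$ for small $\epsilon>0$.  I would couple chains $X$ (environment $\pv$) and $X'$ (environment $\pv'$) started from $0$ with a common stream of uniforms $(U_n)$: while $X_n=X'_n=y$ both step according to $U_n$, agreeing unless $y\Mod m=i$ and $U_n\in(p_i,p_i+\epsilon]$, in which case $X$ steps left while $X'$ steps right.  Let $\tau$ be the first such separation time and evolve the two chains independently thereafter.  Since any nearest-neighbour trajectory from $0$ to $m$ must cross every intermediate integer, the chain from $0$ visits a state of residue $i$ at least once before reaching $m$, and each such visit independently triggers separation with probability $\epsilon$, so $\P(\tau<T_+)>0$.

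On $\{\tau\ge T_+\}$ we have $T'_+=T_+$.  On $\{\tau<T_+\}$, writing $y:=X_\tau$ and letting $\E_z^\pv[\tau_w]$ denote the expected first hitting time of integer $w$ starting from $z$ under environment $\pv$, the strong Markov property at time $\tau+1$ gives
\[
\E[T_+-T'_+\mid\tau,y]=\E_{y-1}^{\pv}[\tau_m]-\E_{y+1}^{\pv'}[\tau_m]=\E_{y-1}^{\pv}[\tau_{y+1}]+\bigl(\E_{y+1}^{\pv}[\tau_m]-\E_{y+1}^{\pv'}[\tau_m]\bigr).
\]
The first summand is strictly positive (positive expected hitting time), and the second is non-negative provided the weak monotonicity $\pv\mapsto\E_z^\pv[\tau_m]$ is non-increasing coordinatewise.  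Integrating against $\indic{\tau<T_+}$ then yields $\E_\pv[T_+]>\E_{\pv'}[T_+]$, as required.

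The main obstacle is this weak monotonicity: it does not follow from the same coupling, because after separation the two chains are typically at different residues and no common uniform can preserve their sample-path order.  I would resolve it by invoking Proposition \ref{prp:explicit}, which expresses $\E_\pv[T_+]$ as the ratio $[m(1+\bro)+2\sum_{r=1}^{m-1}P_r(I;\bs{\rho}_m)]/(1-\bro)$.  Increasing $p_i$ strictly decreases $\rho_i$, and hence strictly decreases $\bro$ and each $P_r$ for $r\ge 1$ (since each $P_r$ contains $r$ products involving $\rho_i$), while strictly increasing $1-\bro$; a brief termwise sign check on the derivative of the ratio then confirms strict decrease in $p_i$, supplying the required monotonicity.
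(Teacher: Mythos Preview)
Your argument is correct, but it is structurally redundant: once you invoke Proposition~\ref{prp:explicit} and observe that the numerator $m(1+\bro)+2\sum_{r=1}^{m-1}P_r$ is strictly decreasing in $p_i$ while the denominator $1-\bro$ is strictly increasing, you have already established the \emph{strict} monotonicity of $\E_\pv[T_+]$ directly.  The entire coupling construction, the stopping time $\tau$, and the decomposition $\E_{y-1}^\pv[\tau_m]=\E_{y-1}^\pv[\tau_{y+1}]+\E_{y+1}^\pv[\tau_m]$ contribute nothing beyond what the formula already gives you; you could delete the middle two paragraphs and the proof would be shorter and complete.

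The paper's proof takes a genuinely different route.  It does not use the explicit formula at all; instead it cites a \emph{monotone} coupling (from \cite{Holmes1,Holmes2}) under which $T'_+\le T_+$ almost surely and $T'_+<T_+$ with positive probability, yielding $\E[T'_+]<\E[T_+]$ in one line.  Your coupling breaks at the first disagreement and cannot maintain pathwise order thereafter, which is precisely why you were forced to fall back on the formula.  The paper's approach is self-contained at the probabilistic level and avoids forward-referencing Proposition~\ref{prp:explicit}; your approach is analytically cleaner once stripped of the unnecessary coupling, since the formula makes the monotonicity transparent.
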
 
\begin{proof}
Let $\pv\in (0,1)^m$ be given.  Symmetry arguments allow us to assume without loss of generality that $\bro=\bro(\pv) \le 1$.  Let $\pv'$ be equal to $\pv$ except that $p'_i>p_i$.  If $\bro=1$ then the claim holds by Proposition \ref{prp:sign}.  Otherwise $\bro<1$ and $\E[T_+]<\infty$ in Lemma \ref{lem:T+speed}.  It is easy (see e.g.~\cite{Holmes1,Holmes2}) to define a probability space on which copies of the CBD $\pv$ and the CBD $\pv'$ are both defined, and such that: (i)  $T_+'\le T_+$ almost surely, and (ii) $T_+'< T_+$ with positive probability.  This shows that $\E[T'_+]<\E[T'_+]$ in Lemma \ref{lem:T+speed} which completes the proof.
\end{proof}




For $u\in [0,1)$, let $\mc{P}_m(u)=\{\pv\in (0,1)^m:v(\pv)=u\}$ denote the set of (ordered) vectors of length $m$ that have speed $u$.  According to  Lemma \ref{lem:strictmono}, for each $p_1,\dots, p_{m-1}$ there is at most one value of $p_0$ for which $v(\pv)=u$.  Therefore $\mc{P}_m(u)$ is a subspace of dimension at most $m-1$, and it has Lebesgue measure 0.  

\subsection{Comparison with RWRE}
\label{sec:RWRE}
If one adds a uniform random shift of the environment (shift the environment by $i \in \{0,1,\dots m-1\}$ with probability $1/m$ for each $i$), this model can be viewed as an example of a random walk in a (quenched) ergodic environment.  To be precise, given the vector of elements $\pv=(p_i)_{i=0}^{m-1}$ let $\Omega$ be the set of bi-infinite sequences $\bs{\omega}=(\omega_x)_{x \in \Z} $ taking values in $\{p_0,\dots, p_{m-1}\}$ for which there exists some $i\in 0,\dots, m-1$ such that $(\omega_j)_{j=0}^{m-1}=(p_{(j+i) \Mod m})_{j=0}^{m-1}$ and 
$\omega_{x}=\omega_{x \Mod m}$ for each $x\in \Z$.  There are at most $m$ distinct elements in $\Omega$.  Let $\mc{F}$ be the power set of $\Omega$, and $\mu$ be the uniform measure on $\Omega$.  Then $(\Omega, \mc{F},\mu)$ is ergodic with respect to the shift operator $\theta((\omega_x)_{x \in \Z})=(\omega_{x+1})_{x \in \Z}$  \footnote{Indeed any $B\in \mc{F}$ can be expressed as an event depending only on the state of $(\omega_0,\dots, \omega_{m-1})$, i.e.~$B=\{(\omega_0,\dots,\omega_{m-1})\in D\}$ for some $D$.  If $\mu(B)>0$ then there exists $i\in \{0,1,\dots, m-1\}$ such that $(p_{(i+j)\Mod m})_{j=0}^{m-1}\in B$, and therefore at least one of $\theta^{-k}(B)$ occurs, so $\mu(\cup_{n\in \N}\theta^{-n}(B))=1$.}.  As such, any result from the theory of random walk in ergodic random environment that holds for a.e.~environment holds for the CBD with $\omega_x=p_{x \Mod m}$ etc.  For example, a law of large numbers with an implicit formula for the speed, is known to hold for random walk in ergodic random environment, see e.g.~\cite{Zeitouni}.


It is natural to compare results for cyclic birth and death (CBD) processes to those for (uniformly elliptic) i.i.d.~RWRE with right step probability from each site being uniformly selected from our set of probabilities $\{p_0,p_1,\dots, p_{m-1}\}$ (counting multiplicites if there are any). The results of Solomon \cite{Solomon} in this special setting become: 
\begin{itemize}
\item The walker is transient to $+\infty$ if and only if 
\begin{align}
m^{-1}\sum_{i=0}^{m-1}\log(\rho_i)<0\iff \prod_{i=0}^{m-1}\rho_i<1.\label{transcrit}
\end{align}
\item If the walker is transient to $+\infty$ then the velocity is strictly positive if and only if $m^{-1}\sum_{i=0}^{m-1}\rho_i<1$, in which case the velocity is equal to 
\begin{equation*}
\dfrac{1-m^{-1}\sum_{i=0}^{m-1}\rho_i}{1+m^{-1}\sum_{i=0}^{m-1}\rho_i}.
\end{equation*}
\end{itemize}
In other words, the criteria for transience (for RW i.i.d.~RE and for  cyclic birth and death chains) ``match'', but the criteria for positivity of the speed do not. Both of these observations are to be expected - in the former case one can see the criteria as coming from a calculation involving the resistance to $+\infty$ (and $-\infty$) together with the LLN for the limiting proportion of time that each environment appears. In the latter case obviously the velocity of the RWRE above should be invariant to permutations of the elements of $\pv$ since choosing a uniform i.i.d.~sample from $\pv$ ignores any ordering.  Moreover, the disorder in the environment allows much stronger traps to be created. 
In view of the last observation, it is natural to ask whether the speed for this RWRE is always less than the CBD (when \eqref{transcrit} holds). This can be easily checked when $m=2$. Numerical examples (e.g.~$\pv=(0.57,0.87,0.98,0.79,0.64,0.56)$) suggest that this is not the case in general when $m>4$.   In other words we believe that for each $m>4$ there exist examples where the speed $v(\pv)$ of the CBD is strictly positive, but smaller than the speed of the corresponding RWRE.  We interpret this observation as saying that when $m$ is large it is possible to create really bad traps in CBD by very specific orderings of $\pv$ and that traps as bad or worse occur extremely rarely in the i.i.d.~RE.  When $m$ is small any particular ordering of the $\pv$ will appear fairly often in the i.i.d.~RE, as will ``even worse'' traps.

\subsection{CLT}
\label{sec:CLT}
Thus far we have only discussed how the deterministic limiting velocity behaves as a function of $\pv$. One might also ask about the variance, and a central limit theorem. Let $T_0=0$ and $T_k=\inf\{n>T_{k-1}:|X_n-X_{T_{k-1}}|=m\}$ and $W_k=m^{-1}X_{T_k}$. Then $(W_k)_{k \in \N}$ is a nearest-neighbour simple random walk on $\Z$ with $\P(W_k=W_{k-1}+1)=h$. It follows immediately that $k^{-1}W_k\to 2h-1$ almost surely. Moreover,
\begin{align*}
k^{-1/2}(W_k-k(2h-1))\cweak \mc{N}(0,4h(1-h)).
\end{align*}
We cannot apply the standard CLT for random walk in ergodic random environment (e.g.~\cite[Theorem 2.2.1]{Zeitouni}) because our environment is non-mixing (it is completely determined by its value in any interval of length $m$).  Nevertheless one can use the Markov chain central limit theorem to obtain a CLT (see e.g.~\cite{veto}):  For each $\pv\in (0,1)^m$ there exists a deterministic $\sigma^2=\sigma^2(\pv)>0$ such that
\begin{equation*}
\dfrac{X_n-nv}{\sqrt{n}}\cweak \mc{N}(0,\sigma^2).
\end{equation*} 
The constant $\sigma^2$ can be expressed in terms of $\bs{\pi}$ and $k$-step transition probabilities for all $k$, but is not really tractable in this form.  It would be of interest to find  a more explicit expression in terms of $\pv$.  In the case $v=0$, Takenami \cite{take} has proved a local limit theorem for the walk.

\section{Proof of Proposition  \ref{prp:explicit}}
\label{sec:awesome_formula}
Fix $m$, $\pv$, and recall Lemma \ref{lem:T+speed}.  For $i\ge 0$ let
    $$
S_i:=\min\{n\ge 0: X_n=i\}.
$$
Note that since the random walk is transient to the
right, we have that $S_i<\infty$ $a.s.$
We will derive a set of $m$ linear equations for
$\E_0[S_1], \E_1[S_2],\ldots, \E_{m-1}[S_m]$, where $\E_j$ denotes expectation with respect to the law of the chain $\bs{X}$, starting from state $j$.  Note that 
\begin{align*}
\E_i[S_{i+1}]=1+(1-p_i)\E_{i-1}[S_{i+1}]=1+(1-p_i)\big(\E_{i-1}[S_i]+\E_{i}[S_{i+1}]\big).
\end{align*}
Therefore $p_i\E_i[S_{i+1}]=1+(1-p_i)\E_{i-1}[S_i]$.  
This set of equations can be written as
$
M {\bf e}={\bf 1}
$
where
\begin{equation*}
  M:=
  \left[ {\begin{array}{cccccc}
   p_0 & 0 & 0 &\cdots & 0 & -(1-p_0) \\
            -(1-p_1)& p_1 &  0 &\cdots & 0 & 0 \\
            0 & -(1-p_2) &  p_2 &\cdots & 0& 0 \\
            . & . & . & \cdots & . & .\\
            . & . & . & \cdots & . & . \\
            . & . & . & \cdots & p_{m-2} & 0\\
            0 & 0 & 0& \cdots& -(1-p_{m-1}) & p_{m-1}
  \end{array} } \right],
\end{equation*}
${\bf e}:=(\E_0[S_1],\ldots, \E_{m-1}[S_m])$ and
${\bf 1}:=(1,\ldots, 1)$.
From Cramer's rule we get that for $i=0,\dots , m-1$,
\begin{equation*}
\E_i[S_{i+1}]=\frac{|M^{(i+1)}|}{|M|}
\end{equation*}
where $M^{(j)}$ is the matrix obtained after replacing the $j$-th
column of $M$ by ${\bf 1}$, and $|A|$ denotes the determinant of $A$.  Since  $\E[T_+]=\sum_{i=0}^{m-1}\E_i[S_{i+1}]$ we have from Lemma \ref{lem:T+speed} that
\begin{equation*}
v=m\cdot \frac{|M|}{\sum_{i=0}^{m-1}|M^{(i+1)}|}.
\end{equation*}

Now note that
\begin{align*}
|M|&=p_0p_1\cdots p_{m-1}-(1-p_0)(1-p_1)\cdots (1-p_{m-1})\\
&=\prod_{i=0}^{m-1}p_i-\prod_{i'=0}^{m-1}(1-p_{i'}),
\end{align*}
which is invariant under permutations
on the sub-indices $0,1,\ldots, m-1$. Also note that
\begin{align*}
|M^{(m)}|&=\prod_{j=0}^{m-2} p_j
+\prod_{j=0}^{m-3}p_j(1-p_{m-1})
  +\prod_{j=0}^{m-4}p_j(1-p_{m-2})(1-p_{m-1})\nonumber\\
  &\quad +\cdots+
  p_0\prod_{j=2}^{m-1}(1-p_{j})+\prod_{j=1}^{m-1}(1-p_j)\\
&=  \sum_{j=0}^{m-1}\prod_{i_1=0}^{j-1}p_{i_1}\prod_{i_2=j+1}^{m-1}(1-p_{i_2}),
\end{align*}
while the other $M^{(i+1)}$ are of the same form but with the indices rotated.   Let $\mc{R}_m$ be the set of rotation permutations, that is, compositions of the permutation $(123\dots 0)$.  It follows that the velocity can be written as 
\begin{align*}
v(\pv)
&=m \cdot \dfrac{\prod\limits_{i=0}^{m-1}p_i-\prod\limits_{i=0}^{m-1}(1-p_i)}{\sum\limits_{\sigma\in \mc{R}_m} \sum\limits_{j=0}^{m-1}\prod\limits_{i_1=0}^{j-1}p_{\sigma(i_1)}\prod\limits_{i_2=j+1}^{m-1}(1-p_{\sigma(i_2)})}\\
&= \dfrac{m\cdot \prod\limits_{i=0}^{m-1} p_i\cdot (1 -\rho)}
{\sum\limits_{k=0}^{m-1} \sum\limits_{j=0}^{m-1} (p_{j+k}+(1-p_{j+k}))\prod\limits_{i_1=0}^{j-1} p_{i_1+k} \prod\limits_{i_2=j+1}^{m-1}(1- p_{i_2+k})}.
\end{align*}
The denominator is equal to 
\[\sum_{k=0}^{m-1}\sum_{j=0}^{m-1}\prod_{i_1=0}^{j} p_{i_1+k} \prod_{i_2=j+1}^{m-1}(1- p_{i_2+k})+  \sum_{k=0}^{m-1}\sum_{j=0}^{m-1}\prod_{i_1=0}^{j-1} p_{i_1+k} \prod_{i_2=j}^{m-1}(1- p_{i_2+k}).\]
This can be written as
\begin{equation}
\prod_{i=0}^{m-1}p_i \cdot \Bigg[\sum_{j=0}^{m-1}\sum_{k=0}^{m-1} \prod_{i_2=j+1}^{m-1}\rho_{i_2+k}+\sum_{j=0}^{m-1}\sum_{k=0}^{m-1} \prod_{i_2=j}^{m-1}\rho_{i_2+k}\Bigg].\label{giraffe}
\end{equation}

Letting $r=m-j$ and $i=m-1-i_2$ and using the fact that the sum over $k$ is a sum over the whole cycle, we see that the second term in the square brackets in \eqref{giraffe} is equal to 
\[
\sum_{r=1}^m \sum_{k=0}^{m-1}\prod_{i=0}^{r-1}\rho_{i+k}.\]

By separating off the term $j=m-1$, and using the fact that an empty product is equal to 1, the first term in the square brackets in \eqref{giraffe} is 
\begin{align}
\sum_{j=0}^{m-2}\sum_{k=0}^{m-1} \prod_{i_2=j+1}^{m-1}\rho_{i_2+k}+  \sum_{k=0}^{m-1} 1=\sum_{j=0}^{m-2}\sum_{k=0}^{m-1} \prod_{i_2=j+1}^{m-1}\rho_{i_2+k}+  m.\label{goat1}
\end{align}
Now let $r=m-j-1$ and $i=m-1-i_2$ to see that this is equal to
\[\sum_{r=1}^{m-1}\sum_{k=0}^{m-1} \prod_{i=0}^{r-1}\rho_{i+k}+  m=\sum_{r=1}^{m}\sum_{k=0}^{m-1} \prod_{i=0}^{r-1}\rho_{i+k}-m\bro+  m
.\label{goat2}\]
It follows that

\[
v(\pv)
= m\cdot \dfrac{1-\bro}{m-m\bro+2\sum\limits_{r=1}^{m}\sum\limits_{k=0}^{m-1}\prod\limits_{i=0}^{r-1}\rho_{i+k}}.
\]    
Cancelling factors of $m$ and using the definition of $P_r$ completes the proof.\qed

\section{Proof of Theorem \ref{thm:ae}}
\label{sec:ae}
Given a subset $E\subset\mathbb R^d$ and $x\in\mathbb R^d$, we write $E+x=\{y+x:y \in E\}$ and define
$$
E_x=E\cap (E+x),
$$
and for a sequence $(x_n)_{n\ge 1}$ we define $E_{x_1,x_2}=(E_{x_1})_{x_2}=E\cap (E+x_1)\cap (E+x_2)\cap (E+x_1+x_2)$,
and recursively

$$
E_{x_1,\ldots, x_{n+1}}=(E_{x_1,\ldots,x_n})_{x_{n+1}}.
$$

  In what follows we will denote the Lebesgue measure on $\mathbb R^d$ by $\lambda$ and for $y\in\mathbb R^d$,
$|y|_2$ its $l^2$-norm.
 We will need the following  multi-point version of Steinhaus's Theorem (see \cite{St}) in $\mathbb R^d$.  Although we have not found this particular statement in the literature, we expect that it is well-known, so we omit the proof. 

\begin{lemma}
  \label{steinhaus} Let $E\subset \mathbb R^d$ with $\lambda(E)>0$.
  Then, for every $n\in \N$, there exists a $\delta=\delta(n,E)>0$ such that for all $y_1,\ldots,y_n\in\mathbb R^d$
  with $|y_i|_2<\delta$, $1\le i\le n$, the set

  $$
 E_{y_1,\ldots,y_n}
$$
is non-empty.
  \end{lemma}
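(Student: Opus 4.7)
The plan is to exhibit $E_{y_1,\ldots,y_n}$ as the positivity set of an integral that is a continuous function of $(y_1,\ldots,y_n)$ and strictly positive at the origin. The only non-trivial analytic input will be continuity of an integral involving $2^n$ translates of $\mathbbm{1}_E$, which is standard.

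First I would reduce to the case $\lambda(E)<\infty$: since $\lambda(E)>0$ and $\mathbb R^d=\bigcup_k B(0,k)$, some $E\cap B(0,k)$ has positive finite measure, and replacing $E$ by this subset only shrinks $E_{y_1,\ldots,y_n}$. Next I would unwind the recursion to obtain the explicit description
\[E_{y_1,\ldots,y_n}=\bigcap_{S\subseteq\{1,\ldots,n\}}\Bigl(E+\sum_{i\in S}y_i\Bigr),\]
verified by induction on $n$ using $E_{y_1,\ldots,y_{n+1}}=E_{y_1,\ldots,y_n}\cap(E_{y_1,\ldots,y_n}+y_{n+1})$ and distributing the translation across the intersection.

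The key step will then be to study
\[h(y_1,\ldots,y_n):=\lambda\bigl(E_{y_1,\ldots,y_n}\bigr)=\int_{\mathbb R^d}\prod_{S\subseteq\{1,\ldots,n\}}\mathbbm{1}_E\Bigl(x-\sum_{i\in S}y_i\Bigr)\,dx.\]
Clearly $h(0,\ldots,0)=\lambda(E)>0$, so it suffices to prove that $h$ is continuous at the origin: then one can pick $\delta>0$ with $h(y_1,\ldots,y_n)>0$ (so in particular $E_{y_1,\ldots,y_n}\neq\emptyset$) whenever every $|y_i|_2<\delta$. To establish continuity I would fix an enumeration of the $2^n$ subsets of $\{1,\ldots,n\}$ and telescope via the identity
\[\prod_{S}a_S-\prod_{S}b_S=\sum_{S_0}(a_{S_0}-b_{S_0})\prod_{S<S_0}b_S\prod_{S>S_0}a_S,\]
applied to $a_S=\mathbbm{1}_E\bigl(x-\sum_{i\in S}y_i\bigr)$ and $b_S=\mathbbm{1}_E(x)$. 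Bounding the side factors by $1$ and integrating yields
\[|h(y_1,\ldots,y_n)-h(0,\ldots,0)|\le\sum_{S\subseteq\{1,\ldots,n\}}\Bigl\|\mathbbm{1}_E\bigl(\cdot-\sum_{i\in S}y_i\bigr)-\mathbbm{1}_E\Bigr\|_1,\]
and each summand tends to $0$ as $(y_1,\ldots,y_n)\to 0$ by the standard $L^1$-continuity of translation for $\mathbbm{1}_E\in L^1(\mathbb R^d)$.

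The ``hard part'' is really just the combinatorial bookkeeping in the telescoping step; the underlying analytic fact is exactly the one that drives the classical one-point Steinhaus theorem, namely continuity of $\mathbbm{1}_E*\widetilde{\mathbbm{1}}_E$. No induction on $n$ is needed beyond the trivial unwinding of the recursive definition.
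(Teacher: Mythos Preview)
Your argument is correct. The paper in fact omits the proof of this lemma entirely, remarking only that the statement is expected to be well-known; so there is no ``paper's proof'' to compare against. Your approach---reducing to $\lambda(E)<\infty$, unwinding the recursion as $E_{y_1,\ldots,y_n}=\bigcap_{S\subseteq\{1,\ldots,n\}}(E+\sum_{i\in S}y_i)$, and then showing that $h(y_1,\ldots,y_n)=\lambda(E_{y_1,\ldots,y_n})$ is continuous at the origin via the telescoping bound and $L^1$-continuity of translation---is exactly the natural extension of the classical one-point Steinhaus argument, and all steps are sound.
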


  Note that
\[
v(\pv)
=
\dfrac{1-\bro}{1-\bro+\dfrac{2}{m}\sum\limits_{r=1}^{m}\sum\limits_{k=0}^{m-1}\prod\limits_{i=0}^{r-1}\rho_{i+k}}=
\dfrac{1-\gamma}{1+\gamma+\dfrac{2}{m}\sum\limits_{r=0}^{m-2}\sum\limits_{k=0}^{m-1}\prod\limits_{i=0}^{r}\rho_{i+k}}.
\]

\begin{proof}[Proof of Theorem \ref{thm:ae}]
The set of $\pv$ for which $\bro=0$ has Lebesgue measure 0, so we may assume that $\bro\ne 0$.  By symmetry (apply the result to $1-\pv$ when $\bro>1$) we may assume that $\bro<1$, so we can use the formula \eqref{awesome}.

The statement is trivial for $m=3$ since there is exactly 1 speed for each $\pv$ in this case.  We fix $m\ge 4$ in what follows.

Let $\mc{J}_m$ denote the set of permutations of $\{0,\dots, m-1\}$ that are not compositions of rotations and reversal.   To prove the theorem, it is sufficient to show that for Lebesgue a.e.~$\pv$ any permutation $\sigma\in \mc{J}_m$ does not give the same velocity, i.e.~$v(\pv)\ne v((\pv)_\sigma)$.  Given a permutation $\sigma$ of $\{0,\ldots,m-1\}$, for $i\ne j$, $0\le i,j\le m-1$, we will say that $\sigma(i)$ is
\emph{adjacent} to $\sigma(j)$ if $\sigma(i)=\sigma(j)+1$ or $\sigma(i)=\sigma(j)-1$, where the sum is $\mod m$.  Note that $\mc{J}_m$ is precisely the set of permutations that do not preserve all adjacency relations, i.e. $\sigma\in \mc{J}_m$ if and only if there exists a $k\in\{0,\ldots,m-1\}$ such that
$\sigma(k)$ is not adjacent to $\sigma(k+1)$.

\medskip

  \noindent {\it Step 1.}
Let $\sigma\in \mc{J}_m$.   It is enough to show that the set $E$ of $\bs{\rho}_m=(\rho_0,\dots, \rho_{m-1})\in (0,\infty)^m$ for which 
\begin{equation}
\label{a1}
\sum\limits_{r=0}^{m-2}\sum\limits_{k=0}^{m-1}\prod\limits_{i=0}^{r}\rho_{i+k}=\sum\limits_{r=0}^{m-2}\sum\limits_{k=0}^{m-1}\prod\limits_{i=0}^{r}\rho_{\sigma(i+k)}
\end{equation}
has Lebesgue measure 0. We will \emph{assume that $\lambda(E)>0$ 
} 
and obtain a contradiction.

\medskip

\noindent{\it Step 2.}
Note that the terms in \eqref{a1} with $r=0$ and $r=m-2$ cancel out, so we have that $(\rho_0,\ldots,\rho_{m-1})\in E$
if and only if
$$
H(\rho_0,\ldots,\rho_{m-1})=0,
$$
where
\begin{equation*}
H(x_0,\ldots,x_{m-1})
=\sum\limits_{r=1}^{m-3}\sum\limits_{s=0}^{m-1}\prod\limits_{i=0}^{}x_{i+s}-\sum\limits_{r=1}^{m-3}\sum\limits_{s=0}^{m-1}\prod\limits_{i=0}^{r}x_{\sigma(i+s)}
\end{equation*}

\medskip

\noindent {\it Step 3.} For each $0\le i\le m-1$, $h>0$ and function $g:\mathbb R^m\to\mathbb R$  define
\begin{eqnarray*}
  &\Delta^i_hg(x_0,\ldots,x_{m-1})\\
  &=\frac{1}{h}\left(g(x_0,\ldots,x_{i-1},x_i+h,x_{i+1},\ldots,x_{m-1})-g(x_0,\ldots,x_{m-1})\right).
\end{eqnarray*}
Note that the operator $\Delta^i_h$ is simply a discrete derivative.

Let us describe how iterations of these operators act on products of $\rho_i$, which is a central component of the proof.  Consider a function $G:\R^m \to \R$ of the form 
\begin{equation*}
G(x_0,\dots, x_{m-1})=\prod_{i \in A}x_i,
\end{equation*}
where $A\subset \{0,1,\dots, m-1\}$.  It is easy to see that
\begin{equation*}
\Delta_h^{j}G(x_0,\dots, x_{m-1})=\begin{cases}
\prod_{i \in A \setminus\{j\}}x_i, & \text{ if }j \in A\\
0, & \text{ otherwise}.
\end{cases}
\end{equation*}
It follows that if $\#A\le \ell$ then 
\begin{equation}
\label{heyhey}
\Delta_h^{j_\ell}\dots \Delta_h^{j_2}\Delta_h^{j_1}G(x_0,\dots, x_{m-1})=\begin{cases}
1& \text{ if }\{j_1,\dots, j_\ell\}=A\\
0 & \text{ otherwise}.
\end{cases}
\end{equation}

For $0\le j\le m-1$ and a permutation $\sigma'$ let 
$$
 H_{\sigma',j}(x_0,\ldots,x_{m-1})=\prod_{i=2}^{m-1} 
  x_{\sigma'(i+j)}.
  $$
Note that $x_{\sigma'(j)}$ and $x_{\sigma'(j+1)}$ are ``missing'' from this product. 

Since $\sigma \in \mc{J}_m$, there exists $k\in\{0,\ldots,m-1\}$ such that 
$\sigma(k)$ and $\sigma(k+1)$ are not adjacent.      It follows from \eqref{heyhey} that 
\begin{equation*}
\Delta^{\sigma(2+k)}_{h}\Delta^{\sigma(3+k)}_{h}\cdots \Delta^{\sigma(m-1+k)}_{h} H_{\sigma,j}(x_0,\ldots,x_{m-1}) 
=\delta_{j,k}.
\end{equation*}
Recall that $I$ is the identity permutation.  Then $H_{I,j}(x_0,\dots, x_{m-1})=\prod_{i=2}^{m-1} 
  x_{i+j}$ is missing $x_j$ and $x_{j+1}$, where $j$ and $j+1$ are adjacent.  The set $\{\sigma(i+k):i=2,\dots, m-1\}$ is missing $\sigma(k)$ and $\sigma(k+1)$ which are not adjacent.  It follows that $\{\sigma(k),\sigma(k+1)\}\ne \{j,j+1\}$ so by \eqref{heyhey},
\begin{equation*}
\Delta^{\sigma(2+k)}_{h}\Delta^{\sigma(3+k)}_{h}\cdots \Delta^{\sigma(m-1+k)}_{h} H_{I,j}(x_0,\ldots,x_{m-1}) 
=0.
\end{equation*}
There are $m-2$ discrete derivatives here, and in the definition of $H$, only $r=m-3$ gives a product of $m-2$ terms.  From  \eqref{heyhey} we see that
\begin{align}
&\Delta^{\sigma(2+k)}_{h}\cdots \Delta^{\sigma(m-1+k)}_{h}H(x_0,\ldots,x_{m-1})\label{heyhey2}\\
&=\Delta^{\sigma(2+k)}_{h}\cdots \Delta^{\sigma(m-1+k)}_{h}\Bigg[
\sum\limits_{s=0}^{m-1}\prod\limits_{i=0}^{m-3}x_{i+s}-\sum\limits_{s=0}^{m-1}\prod\limits_{i=0}^{m-3}x_{\sigma(i+s)}\Bigg].\nonumber
\end{align}
Using the substitution $j=s-2$ (mod $ m$) shows that the term in square brackets is 
\[\sum_{j=0}^{m-1}H_{I,j}(x_0,\dots, x_{m-1})-\sum_{j=0}^{m-1}H_{\sigma,j}(x_0,\dots, x_{m-1}),\]
and therefore \eqref{heyhey2} is equal to 
$0-1=-1$ for every $(x_0,\dots, x_{m-1})$.

Now, suppose that $\lambda(E)>0$.   From Lemma \ref{steinhaus} (with $d$ and $n$ therein both equal to $m$), there exists $\delta(m,E)>0$ such that for all $y_0,\dots, y_{m-1}\in \R^m$ with $|y_i|_2<\delta$, $0\le i \le m-1$, the set $E_{y_0,\dots, y_{m-1}}$ is non-empty.   Taking $y_i=h e_{i+1}$ for $h\in (0,\delta)$, where $e_1,\dots, e_{m}$ are the canonical basis vectors in $\R^m$, it follows that there exists a point $(\rho_0,\ldots,\rho_{m-1})\in E$ such that
$(\rho_0+h,\rho_1,\ldots,\rho_{m-1}), (\rho_0,\rho_1+h,\rho_2,\ldots,\rho_{m-1}), (\rho_0+h,\rho_1+h,\rho_2,\ldots,\rho_{m-1})$, etc., are all
in $E$ also. Let $B$ be this set of points.  Then 
\[B\subset \{(x_0,\dots x_{m-1}):x_i\in \{\rho_i,\rho_i+h\} \text{ for every }i=0,\dots, m-1\}.\]
By definition 
\begin{equation}
\Delta^{\sigma(2+k)}_{h}\Delta^{\sigma(3+k)}_{h}\cdots \Delta^{\sigma(m-1+k)}_{h}H(\rho_0,\ldots,\rho_{m-1}) \label{byebye}
\end{equation}
is a linear combination of terms of the form $H(x^i_0,\dots, x^i_{m-1})$ with each $\bs{x}^i\in B$.  But by Step 2 $H(\bs{x})=0$ for all $\bs{x}\in B$, so \eqref{byebye} is equal to 0, which contradicts the fact that \eqref{heyhey} is equal to 1 for all $\bs{x}$.
\end{proof}

\section{Proof of Theorem \ref{thm:greedy3}}
\label{sec:r3}
In this section we prove Theorem \ref{thm:greedy3}.  Recall that for $r\in [m]$ and a permutation $\sigma\in \mc{S}_m$ we have 
\[P_r(\sigma;\av):=\sum_{k=0}^{m-1}\prod_{i=0}^{r-1}a_{\sigma^{-1}(k+i)},\]
with indices interpreted $\Mod m$.   

Suppose that we prove the result for $r=k\in [m]$.  Since the entries of $\av$ are decreasing, the reciprocals $\av^{-1}$ of $\av$ listed in reverse order (write this vector as $\av^\dagger$) are also increasing.  So we know that the $\greedy$ maximises $P_k(\cdot,\av^\dagger)$.  But each $P_r$ is trivially invariant to reversals so $\greedy$ maximises $P_k(\cdot,\av^{-1})$.  The observation \eqref{trick1} then shows that $\greedy$ maximises $P_{m-k}(\cdot, \av)$.  It therefore suffices to prove the claim for $r=2,3$.  We prove each of these results by induction on $m$.


\begin{proof}[Proof for $r=2$.]
For the base case $m=2$ there is nothing to prove.  We will assume the result for $m$ and prove it for $m+1$.  Let $\bs{a}_{m+1}$ be such that $a_0\ge \dots\ge a_{m}$.  Write $\bs{a}_{m+1}=(\bs{a}_m,a_m)$.  Let $\sigma$ denote a permutation of $\{0,1,\dots, m\}$, and let $j_\sigma=\sigma(m)$.  We have that 
\begin{align*}
P_2(\sigma, \bs{a}_{m+1})=P_2( \sigma, (\av,a_m))=\sum_{i=0}^{m}a_{\sigma^{-1}(i)}a_{\sigma^{-1}(i+1)}=\sum_{i=0}^{m}b_ib_{i+1},
\end{align*}
where $b_i=a_{\sigma^{-1}(i)}$ (and $b_{m+1}=b_0$).  Note that $b_{j_\sigma}=a_m$.  Let $\hat{\sigma}$ denote the permutation of $\{0,1,\dots, m-1\}$ defined by 
\begin{align*}
\hat\sigma^{-1}(i)=\begin{cases}
\sigma^{-1}(i), & \text{ if }i<j_\sigma\\
\sigma^{-1}(i+1), & \text{ if }i\in [j_\sigma,m-1],
\end{cases}
\end{align*}
where if $j_\sigma=m$, the second situation doesn't arise.  Now note that
\begin{align*}
P_2( \sigma, (\av,a_m))&=P_2( \hat{\sigma}, \av)+R(\sigma,\bs{a}_{m+1}),
\end{align*}
where
\begin{align*}
R(\sigma,\bs{a}_{m+1})=b_{j_\sigma-1}a_m+b_{j_\sigma+1}a_m - b_{j_\sigma-1}b_{j_\sigma+1}.
\end{align*}
We claim the the greedy permutation $\greedy(m+1)$ on $\{0,1,\dots, m\}$ maximises  both $P_2( \hat{\sigma}, \av)$ and $R(\sigma,\bs{a}_{m+1})$, and hence it maximises $P_2( \sigma, (\av,a_m))$.

Note that $\hat{\sigma}_{\text{greedy}}(m+1)=\greedy(m)$. 
By the induction hypothesis, $\greedy(m+1)$ then maximises $P_2( \hat{\sigma}, \av)$.

Let $f(x,y)=xa_m+ya_m-xy$ be defined for all $x,y\ge a_m$, and note that $R(\sigma,\bs{a}_{m+1}) =f(b_{j_{\sigma}-1},b_{j_{\sigma}+1})$.  The partial derivatives are $f_1(x,y)=a_m-y\le 0$ and $f_2(x,y)=a_m-x\le 0$.  Therefore the largest possible value of $f(x,y)$ for $x=b_i$, $y=b_{i'}$ with $i\ne i'$ occurs with $\{b_i,b_{i'}\}=\{a_{m-1},a_{m-2}\}$.  In other words, any permutation $\sigma$ that puts $a_m$ between $a_{m-1}$ and $a_{m-2}$ maximises $R(\sigma,\bs{a}_{m+1})$.  Since $\greedy(m+1)$ has this property, this completes the proof for $r=2$.
\end{proof}

We now prove the result for $r=3$, using the same notation as above.
\begin{proof}[Proof for $r=3$.]  
 For $m=3$ there is nothing to prove.  Note that
\begin{align*}
P_3(\sigma,\bs{a}_{m+1})&=P_3(\hat{\sigma},\av)+R(\sigma,\bs{a}_{m+1}),
\end{align*}
where now
\begin{align*}
R(\sigma,\bs{a}_{m+1})&=b_{j_\sigma-2}b_{j_\sigma-1}a_m+b_{j_\sigma-1}b_{j_\sigma+1}a_m +b_{j_\sigma+1}b_{j_\sigma+2}a_m\\
&\quad - b_{j_\sigma-2}b_{j_\sigma-1}b_{j_\sigma+1}-b_{j_\sigma-1}b_{j_\sigma+1}b_{j_\sigma+2}.
\end{align*}
By the induction hypothesis,  the term $P_3(\hat{\sigma},\av)$ is maximised by any $\sigma$ such that $\hat{\sigma}=\greedy(m)$.  Note that $\sigma =\greedy(m+1)$ has this property.

For $x_{-2},x_{-1},x_{1},x_2\in [a_m,1)$ let 
\[f(x_{-2},x_{-1},x_1,x_2)=x_{-2}x_{-1}a_m+x_{-1}x_{1}a_m +x_{1}x_{2}a_m- x_{-2}x_{-1}x_{1}-x_{-1}x_{1}x_2.\]
Note that $R(\sigma,\bs{a}_{m+1})=f(b_{j_\sigma-2},b_{j_\sigma-1},
b_{j_\sigma+1},b_{j_\sigma+2})$.  
The partial derivatives are:
\begin{align*}
f_1(x_{-2},x_{-1},x_1,x_2)&=x_{-1}(a_m-x_1)\le 0\\
f_2(x_{-2},x_{-1},x_1,x_2)&=x_{-2}(a_m-x_1)+x_1(a_m-x_{-2})\le 0\\
f_3(x_{-2},x_{-1},x_1,x_2)&=x_{-1}(a_m-x_{2})+x_2(a_m-x_{-1})\le 0  \\
f_4(x_{-2},x_{-1},x_1,x_2)&=x_1(a_m-x_{-1})\le 0.
\end{align*}
The term $R(\sigma,\bs{a}_{m+1})$ is therefore maximised at some $\sigma$ for which $\{b_{j_\sigma-2},b_{j_\sigma-1},b_{j_\sigma+1},
b_{j_\sigma+2}\}=\{a_{m-4},a_{m-3},a_{m-2},a_{m-1}\}$.  Note that  $\greedy(m+1)$ also has this property.  We proceed assuming that $\{b_{j_\sigma-2},b_{j_\sigma-1},b_{j_\sigma+1},
b_{j_\sigma+2}\}=\{a_{m-4},a_{m-3},a_{m-2},a_{m-1}\}$, and we will show that $\greedy(m+1)$ maximises $R(\sigma,\bs{a}_{m+1})$ among all $\sigma$ for which $\{b_{j_\sigma-2},b_{j_\sigma-1},b_{j_\sigma+1},
b_{j_\sigma+2}\}=\{a_{m-4},a_{m-3},a_{m-2},a_{m-1}\}$.  This suffices to prove then that $\greedy(m+1)$ maximises $P_3(\sigma,\bs{a}_{m+1})$.

Now note that by adding and subtracting the terms $b_{j_\sigma+2}   b_{j_\sigma-2}b_{j_\sigma-1}$ and $b_{j_\sigma+1}b_{j_\sigma+2}b_{j_\sigma-2}$ we can write 
\begin{align*}
R(\sigma,\bs{a}_{m+1})&=P_3\Big(I_5,(b_{j_\sigma-2},b_{j_\sigma-1},a_m,b_{j_\sigma+1},
b_{j_\sigma+2})\Big)\\
&\quad-P_3\Big(I_4,(b_{j_\sigma-2},b_{j_\sigma-1},b_{j_\sigma+1},
b_{j_\sigma+2})\Big),
\end{align*}
where $I_k$ is the identity permutation on $k$ elements.  The first term on the right hand side is equal to 
\begin{align*}
&\prod_{i=-2}^2b_{j_\sigma-i} \times P_2\Big(I_5,(1/b_{j_\sigma-2},1/b_{j_\sigma-1},1/a_m,1/b_{j_\sigma+1},
1/b_{j_\sigma+2})\Big)\\
&=\prod_{i=0}^4 a_{m-i}\times P_2\Big(I_5,(1/b_{j_\sigma-2},1/b_{j_\sigma-1},1/a_m,1/b_{j_\sigma+1},
1/b_{j_\sigma+2})\Big).
\end{align*}
The product prefactor is constant.  By the result already established for $r=2$ and the symmetry of the greedy permutation, the quantity $P_2$ here is maximised (among those as above) by any permutation $\sigma$ for which the vector $(1/b_{j_\sigma-2},1/b_{j_\sigma-1},1/a_m,1/b_{j_\sigma+1},
1/b_{j_\sigma+2})$ is already the greedy ordering (or a symmetry of it) of $\{a_{m-4},\dots, a_{m}\}$.   Note that $\sigma=\greedy(m+1)$ has this property.

Finally, the term $P_3\Big(I_4,(b_{j_\sigma-2},b_{j_\sigma-1},b_{j_\sigma+1},
b_{j_\sigma+2})\Big)$ is equal to 
\[P_1\Big(I_4,(1/b_{j_\sigma-2},1/b_{j_\sigma-1},1/b_{j_\sigma+1},
1/b_{j_\sigma+2})\Big),\]
and since $P_1$ does not depend on the permutation, we have that $\greedy(m+1)$ is a minimiser of this term as well.  This completes the proof.
\end{proof}

\doubleblind{\subsection*{Acknowledgements}
MH thanks Robert Medland and Victor Kleptsyn for various helpful discussions near the beginning of this project.  MH was supported by  Future Fellowship FT160100166 from the Australian Research Council. AR was supported by Fondecyt 1180259
and Iniciativa Cient\'\i fica Milenio.
}

\bibliographystyle{plain}

\end{document}